\newfont{\sheaf}{eusm10 scaled\magstep1}
\def\N{\ensuremath{\mathbb N}}
\def\Z{\ensuremath{\mathbb Z}}
\def\P{\ensuremath{\mathbb P}}
\def\im{\operatorname{im}}
\def\rank{\operatorname{rank}}
\def\dim{\operatorname{dim}}
\def\Hom{{\operatorname {Hom}}}
\newtheorem{THM}{Theorem}[section]
\newtheorem{theorem}{Theorem}
\newtheorem{proposition}[theorem]{Proposition}
\newtheorem{definition}[theorem]{Definition}
\newtheorem{remark}[theorem]{Remark}
\newtheorem{corollary}[theorem]{Corollary}
\numberwithin{equation}{section}
\begin{document}


\title  [Varieties of complexes and foliations]    {Varieties of complexes and foliations}

\author{Fernando Cukierman}

\dedicatory{}

\begin{abstract}
Let $\mathcal F(r, d)$ denote the moduli space of algebraic foliations of codimension one and degree $d$ in complex proyective space of dimension $r$. We show that  $\mathcal F(r, d)$ may be represented as a certain linear section of a variety of complexes. From this fact we obtain information on the irreducible components of $\mathcal F(r, d)$.
\end{abstract}

\thanks{    }

\keywords{  }

\subjclass{    }

\maketitle

\tableofcontents

\noindent
\section{Basics on varieties of complexes.}
\label{basic}

\noindent

\subsection { }  \label{subsection1}

Let $K$ be a field and let $V_0, \dots, V_n$ be vector spaces over $K$ of finite dimensions
$$d_i = \dim_K (V_i)$$
Consider sequences of linear functions   
$$
\xymatrix {{V_0}  \ar[r]^{f_1} & {V_1} \ar[r]^{f_2} & \dots \ar[r]^{f_n} & {V_n}}
$$
also written
$$f = (f_1, \dots, f_n) \in V = \prod_{i=1}^n \Hom_K(V_{i-1}, V_i)$$
The variety of differential complexes is defined as
$$\mathcal C = \mathcal C(V_0, \dots, V_n) = \{f = (f_1, \dots, f_n) \in V / \ 
f_{i+1} \circ f_i = 0,  \  i = 1, \dots, n -1 \}$$
It is an affine variety in $V$, given as an intersection of quadrics.
We intend to study the geometry of this variety (see also e. g. \cite{Br}, \cite{K}).

\medskip

\subsection { } \label{subsection2}

Since the defining equations $f_{i+1} \circ f_i = 0$ are bilinear, we may also consider, 
when it is convenient, the projective variety of complexes 
$$P \mathcal C \subset  \prod_{i=1}^n \mathbb P \Hom_K(V_{i-1}, V_i)$$
as a subvariety of a product of projective spaces. 

\medskip

Denoting  $V_{\cdot} = \oplus_{i=0}^n V_i$, each complex $f \in \mathcal C$ may be thought as a degree-one homomorphism
of graded vector spaces $f: V_{\cdot} \to V_{\cdot}$ with $f^2 = 0$.

\medskip

\subsection { }  \label{subsection3}

For each  $f \in \mathcal C$ and $i = 0, \dots, n $ define
$$B_i  = f_i(V_{i-1}) \subset Z_i = \text{ker }(f_{i+1}) \subset V_i$$
and
$$H_i = Z_i / B_i$$
(we understand by convention that $B_0 = 0$)

\medskip

From the exact sequences
$$ 0 \to B_i  \to Z_i  \to  H_i  \to 0$$
$$0 \to Z_i  \to V_i  \to B_{i+1} \to 0$$
we obtain for the dimensions 
$$b_i = \dim_K (B_i), \ \ z_i = \dim_K (Z_i), \ \  h_i = \dim_K (H_i)$$
the relations
$$d_i = b_{i+1} + z_i = b_{i+1} + b_i + h_i$$
where $i = 0, \dots, n$ and $b_0 = b_{n+1} = 0$.
Therefore,

\medskip

\begin{proposition} \label{proposition1}

a) The $h_i$ and the $b_j$ determine each other by the formulas:
$$h_i  = d_i -  (b_{i+1} + b_i)$$
$$b_{j+1} = \chi_j(d)  - \chi_j(h)$$
where for a sequence $e = (e_0, \dots, e_n)$ and $0 \le j \le n$ we denote
$$\chi_j(e) = (-1)^{j } \sum_{i=0}^j (-1)^i e_i = e_{j} - e_{j-1} + e_{j-2} + \dots + (-1)^j e_{0}$$
 the $j$-th Euler characteristic of $e$.

\medskip

b) The inequalities $b_{i+1} + b_i  \le d_i$ are satisfied for all $i$.

\end{proposition}

\medskip

\begin{proof}     

\smallskip

We write down the  $b_j$ in terms of the $h_i$: from
$$\sum_{i=0}^j (-1)^i d_i = \sum_{i=0}^j (-1)^i (b_{i+1} + b_i + h_i)$$
we obtain
$$b_{j+1} = (-1)^j ( \sum_{i=0}^j (-1)^i d_i   -  \sum_{i=0}^j (-1)^i h_i )$$
as claimed.

\end{proof} 

\smallskip

Notice in particular that since $b_{n+1} = 0$, we have the usual relation
$$\sum_{i=0}^n (-1)^i d_i   = \sum_{i=0}^n (-1)^i h_i$$

\medskip

\subsection { } \label{subsection4}

Now we consider the subvarieties of $\mathcal C$ obtained by imposing rank conditions on the $f_i$.

\medskip

\begin{definition} \label{definition1} 
For each $r = (r_1, \dots, r_n) \in \mathbb N^n$ define
$$\mathcal C_r = \{f = (f_1, \dots, f_n) \in \mathcal C /  \ 
\rank(f_i) = r_i,  \  i = 1, \dots, n \}$$
\end{definition}

These are locally closed subvarieties of $\mathcal C$.

\medskip

\begin{proposition} \label{proposition2}

\medskip
\noindent
a) $\mathcal C_r \ne \emptyset$  if and only if  $r_{i+1} + r_i  \le d_i$  for  $0 \le i \le n$
(we use the convention $r_0 = r_{n+1} = 0$)

\medskip
\noindent
b) In the conditions of a), $\mathcal C_r$ is smooth and irreducible, of dimension

$$
\dim (\mathcal C_r) = \sum_{i=0}^n  (d_i - r_i)  (r_{i+1} + r_i ) 
= \sum_{i=0}^n  (d_i - r_i) (d_i - h_i) 
= \frac 1 2 \sum_{i=0}^n  (d_{i}^2 - h_{i}^2) 
$$

\end{proposition}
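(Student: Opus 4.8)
The plan is to realise $\mathcal C_r$ as a single orbit of the group $G = \prod_{i=0}^n GL(V_i)$, acting on $V = \prod_{i=1}^n \Hom_K(V_{i-1},V_i)$ by $(g\cdot f)_i = g_i \circ f_i \circ g_{i-1}^{-1}$. This action preserves both the complex condition $f_{i+1}\circ f_i=0$ and each rank, hence preserves $\mathcal C_r$; essentially all of (b) will follow once I show the action is transitive and compute a stabilizer.

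For (a) and for transitivity I would invoke the classification of bounded complexes of finite-dimensional $K$-vector spaces: every $f$ decomposes as a direct sum of elementary complexes, namely $S_i$ (the space $K$ placed in degree $i$ with zero differential) and $D_i$ (the acyclic complex $K \xrightarrow{\ \sim\ } K$ in degrees $i-1,i$). The multiplicities are the homology dimensions $h_i$ for the $S_i$ and the ranks $r_i=b_i$ for the $D_i$, because counting dimensions in degree $i$ gives $d_i = h_i + r_i + r_{i+1}$. Thus a complex with prescribed ranks exists exactly when $h_i := d_i - r_i - r_{i+1}\ge 0$ for all $i$, which is the inequality of (a) (its necessity is immediate from $B_i\subseteq Z_i$). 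Two complexes in $\mathcal C_r$ share the invariants $(h_i,r_i)$, hence are isomorphic, and a complex isomorphism is precisely an element of $G$ carrying one to the other; so $\mathcal C_r$ is a single $G$-orbit. Being the orbit of the irreducible group $G$, it is smooth and irreducible.

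It remains to compute $\dim \mathcal C_r = \dim G - \dim \operatorname{Stab}(f)$. The stabilizer of $f$ is the automorphism group of the complex $C^\bullet$, which is the unit group of the associative algebra $\operatorname{End}(C^\bullet)$ of chain endomorphisms and is therefore Zariski-open in it; hence $\dim \operatorname{Stab}(f) = \dim_K \operatorname{End}(C^\bullet)$. Using the elementary decomposition, $\operatorname{End}(C^\bullet)$ splits into $\Hom$ spaces between the summands, which I would tabulate once: the only nonzero ones are $\Hom(S_i,S_i)$, $\Hom(S_i,D_i)$, $\Hom(D_i,S_{i-1})$, $\Hom(D_i,D_i)$ and $\Hom(D_i,D_{i-1})$, each of dimension one. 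Weighting by multiplicities gives
$$\dim_K \operatorname{End}(C^\bullet) = \sum_i h_i^2 + \sum_i h_i r_i + \sum_i r_i h_{i-1} + \sum_i r_i^2 + \sum_i r_i r_{i-1}.$$
Substituting $d_i = h_i + r_i + r_{i+1}$ and simplifying (using $r_0=r_{n+1}=0$, so that $\sum_i r_{i+1}^2 = \sum_i r_i^2$) shows this equals $\tfrac12\sum_i (d_i^2 + h_i^2)$, whence $\dim \mathcal C_r = \sum_i d_i^2 - \tfrac12\sum_i(d_i^2 + h_i^2) = \tfrac12\sum_i(d_i^2 - h_i^2)$; the equalities with the other two expressions follow from $r_i + r_{i+1} = d_i - h_i$.

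The main obstacle I anticipate is the bookkeeping in the $\operatorname{End}$ computation: one must correctly identify the degree-shifted morphism spaces $\Hom(D_i,S_{i-1})$ and $\Hom(D_i,D_{i-1})$, as opposed to $\Hom(D_i,D_{i+1})=0$, since a sign or index error there corrupts the cross terms. A cleaner alternative that avoids $\operatorname{End}$ altogether is an inductive fibration: projecting $f\mapsto f_1$ exhibits $\mathcal C_r$ over the smooth, irreducible determinantal variety of rank-$r_1$ maps $V_0\to V_1$, of dimension $r_1(d_0+d_1-r_1)$, with fiber $\mathcal C_{(r_2,\dots,r_n)}(V_1/B_1, V_2,\dots,V_n)$; smoothness, irreducibility and the dimension formula then all follow by induction on $n$. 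I would present the orbit argument for the qualitative statements and one of the two computations for the dimension.
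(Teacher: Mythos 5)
Your proof is correct, but it takes a genuinely different route from the paper's. The paper proves (a) by an explicit inductive construction (extend a complex of length $n-1$ by any map $V_{n-1}/B_{n-1}\to V_n$ of rank $r_n$, which exists because $d_{n-1}-r_{n-1}\ge r_n$), and proves (b) by induction on $n$ using the projection $\pi:\mathcal C(V_0,\dots,V_n)_r\to\mathcal C(V_0,\dots,V_{n-1})_{\bar r}$ that forgets $f_n$; the fibers are the rank-$r_n$ determinantal loci in $\Hom(V_{n-1}/B_{n-1},V_n)$, smooth and irreducible of dimension $r_n(d_{n-1}-r_{n-1}+d_n-r_n)$. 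That is essentially the ``cleaner alternative'' you sketch at the end (you fiber over $f_1$, the paper over $f_n$, but it is the same idea). Your primary argument --- orbit-stabilizer for $G=\prod_i GL(V_i)$, using the splitting of a bounded complex of vector spaces into elementary summands $S_i$ and $D_i$ and the computation of $\dim_K\operatorname{End}(C^\bullet)$ --- is sound: your table of nonzero $\Hom$ spaces between elementary complexes is right (including the delicate points that $\Hom(D_i,S_i)=0$, $\Hom(D_i,D_{i+1})=0$ while $\Hom(D_i,S_{i-1})$ and $\Hom(D_i,D_{i-1})$ survive), and the identity $\dim\operatorname{End}=\tfrac12\sum_i(d_i^2+h_i^2)$ checks out, giving $\dim\mathcal C_r=\sum_i d_i^2-\tfrac12\sum_i(d_i^2+h_i^2)=\tfrac12\sum_i(d_i^2-h_i^2)$. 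What is notable is that the machinery you invoke is precisely what the paper develops \emph{later}: Proposition \ref{proposition5} is the split decomposition (citing Bourbaki's \emph{complexes scind\'es}), Proposition \ref{dimension hom} is exactly your $\Hom$ table, and Remark \ref{remark2} records the transitivity of the $G$-action as a byproduct. So your proof front-loads that material; it buys transitivity and an intrinsic explanation of the $\tfrac12\sum(d_i^2-h_i^2)$ formula as $\dim G-\dim\operatorname{Aut}(f)$, at the cost of relying on the classification of split complexes and on generalities about orbits of connected algebraic groups, whereas the paper's induction is more elementary and self-contained, needing only the classical dimension count for determinantal varieties.
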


\medskip

\begin{proof}     

\smallskip

a) One implication follows from  Proposition \ref{proposition1}. Conversely, in the given conditions, we want
to construct a complex with $\rank(f_i) = r_i$ for all $i$. Suppose we
constructed 

$$
\xymatrix {{V_0}  \ar[r]^{f_1} & {V_1} \ar[r]^{f_2} & \dots \ar[r]^{f_{n-1}} & {V_{n-1}}}
$$

We need to define $f_{n}: V_{n-1} \to V_{n}$ such that 
$f_n \circ f_{n-1} = 0$ and $\rank(f_n) = r_n$,
that is, a map $V_{n-1}/ B_{n-1} \to V_{n}$ of rank $r_n$.
Such a map exists since $\dim (V_{n-1}/ B_{n-1}) = d_{n-1} -  r_{n-1} \ge  r_n$.

\medskip

b) Consider the projection (forgeting $f_n$)
$$\pi: \mathcal C(V_0, \dots, V_n)_r  \to \mathcal C(V_0, \dots, V_{n-1})_{\bar r}$$
where $r = (r_1, \dots, r_n)$ and $\bar r = (r_1, \dots, r_{n-1})$.
Any fiber $\pi^{-1}(f_1, \dots, f_{n-1})$ is isomorphic to
the subvariety in $\Hom(V_{n-1}/ B_{n-1}, V_{n})$ of maps
of rank $r_n$; therefore, it is smooth and irreducible of dimension
$r_n(d_{n-1} -  r_{n-1} + d_{n} -  r_{n})$ (see \cite{ACGH}).
The assertion follows by induction on $n$. The various expressions for
$\dim (\mathcal C_r)$ follow by direct calculations.

\medskip

Another proof of a): Given $r$ such that $r_{i+1} + r_i  \le d_i$,
put $h_i =  d_i - (r_{i+1} + r_i) \ge 0$ and 
$z_i =  d_i - r_{i+1} = h_i + r_i$. Choose linear subspaces
$B_i   \subset Z_i  \subset V_i$ with $\dim(B_i ) = r_i$
and $\dim(Z_i ) = z_i$. 
Since $\dim(V_{i -1} /Z_{i -1}) = \dim(B_{i})$, choose
an isomorphism $\sigma_i: V_{i -1} /Z_{i -1}  \to  B_{i}$ for each $i$.
Composing with the natural projection $V_{i -1}  \to  V_{i -1} /Z_{i -1}$
we obtain linear maps $V_{i -1}  \to  B_{i}$ with kernel $Z_{i -1}$
and rank $r_i$, as wanted.

 \end{proof}
 
\medskip

\begin{remark} \label{remark1}

In terms of dimension of homology, the condition in Proposition \ref{proposition3} a) translates as follows.
Given $h = (h_0, \dots, h_n) \in \mathbb N^{n+1}$, there exists a complex
with dimension of homology equal to $h$ if and only if
$\chi_i(h) \le \chi_i(d)$ for $i=1, \dots, n-1$ and $\chi_n(h) = \chi_n(d)$.

\end{remark}

\smallskip 
\smallskip

\begin{remark}  \label{remark2}

The group $G = \prod_{i=0}^n \text{ GL}(V_i, K)$
acts on $V = \prod_{i=1}^n \Hom_K(V_{i-1}, V_i)$
via 
$$(g_0, g_1, \dots, g_n) \cdot (f_1, f_2, \dots, f_n) =
(g_0  f_1 g_1^{-1}, g_1  f_2 g_2^{-1}, \dots, g_{n-1}  f_n g_n^{-1})$$
This action clearly preserves the variety of complexes.
It follows from the proof above that the action on each $\mathcal C_r$ is transitive.
Hence, the non-empty $\mathcal C_r$ are the orbits of $G$ acting on 
$\mathcal C(V_0, \dots, V_n)$.

\end{remark}

\medskip

\begin{definition}

For $r, s \in \mathbb N^n$ we write $s \le r$ if $s_i \le r_i$ for $i = 1, \dots, n$.

\end{definition}

\medskip

\begin{corollary} \label{corollary1}

 If $\mathcal C_r \ne \emptyset$ and $s \le r$ then  $\mathcal C_s \ne \emptyset$.
Also, $\dim (\mathcal C_s) > 0$ if $s \ne 0$.

\end{corollary}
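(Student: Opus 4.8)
The plan is to deduce both assertions directly from Proposition \ref{proposition2}, which translates the questions of nonemptiness and dimension into elementary inequalities among the rank vectors.

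For the first claim I would start from the hypothesis $\mathcal C_r \ne \emptyset$ and invoke Proposition \ref{proposition2}(a) to obtain $r_{i+1} + r_i \le d_i$ for all $0 \le i \le n$. Since $s \le r$ means $s_i \le r_i$ for every $i$, and the boundary conventions $s_0 = s_{n+1} = 0 = r_0 = r_{n+1}$ are compatible, it follows that $s_{i+1} + s_i \le r_{i+1} + r_i \le d_i$ for all $i$. Applying the converse direction of Proposition \ref{proposition2}(a) to $s$ then yields $\mathcal C_s \ne \emptyset$.

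For the second claim I would use the dimension formula $\dim(\mathcal C_s) = \sum_{i=0}^n (d_i - s_i)(s_{i+1} + s_i)$ of Proposition \ref{proposition2}(b), now available because $\mathcal C_s$ is known to be nonempty. The key observation is that each summand is nonnegative: the inequality $s_{i+1} + s_i \le d_i$ forces $d_i - s_i \ge s_{i+1} \ge 0$, while $s_{i+1} + s_i \ge 0$ is clear. To get strict positivity when $s \ne 0$, I would choose an index $j \in \{1, \dots, n\}$ with $s_j > 0$ and isolate the single summand with index $i = j-1$, namely $(d_{j-1} - s_{j-1})(s_j + s_{j-1})$. Here $d_{j-1} - s_{j-1} \ge s_j > 0$ and $s_j + s_{j-1} \ge s_j > 0$, so this term is at least $s_j^2 > 0$; since all remaining terms are nonnegative, the total is strictly positive.

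The argument is essentially bookkeeping with the two parts of Proposition \ref{proposition2}, so I do not anticipate a genuine obstacle. The only point demanding a moment of care is locating a provably positive summand in the dimension formula, which is handled by the inequality $d_i - s_i \ge s_{i+1}$ extracted from the nonemptiness criterion.
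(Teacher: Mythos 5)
Your proof is correct and follows the same route as the paper, which simply cites Proposition \ref{proposition2}(a) for the first assertion and Proposition \ref{proposition2}(b) for the second; you have merely filled in the elementary bookkeeping. The identification of the strictly positive summand $(d_{j-1}-s_{j-1})(s_j+s_{j-1}) \ge s_j^2$ is a valid way to make the positivity claim explicit.
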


\smallskip\smallskip

 \begin{proof}  The first assertion follows from Proposition \ref{proposition2} a), and the second from Proposition \ref{proposition2} b).   \end{proof}

\medskip

\begin{proposition} \label{proposition3}

  With the notation above,
  
$$\overline {\mathcal C}_r = \bigcup_{s \le r} {\mathcal C}_s = 
\{f \in \mathcal C / \ \rank(f_i) \le r_i , \  i = 1, \dots, n \}$$

\end{proposition}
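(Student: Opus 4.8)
The plan is to prove the two asserted equalities separately. Throughout, write $D_r=\{f\in\mathcal C/\ \rank(f_i)\le r_i,\ i=1,\dots,n\}$, and note that we may assume $\mathcal C_r\ne\emptyset$ (equivalently $r_{i+1}+r_i\le d_i$ by Proposition~\ref{proposition2} a); otherwise the right-hand side still contains the zero complex while $\overline{\mathcal C}_r=\emptyset$). The second equality $\bigcup_{s\le r}\mathcal C_s=D_r$ is immediate from the definitions: each $f\in\mathcal C$ lies in exactly one stratum, namely $\mathcal C_{\rho(f)}$ with $\rho(f)=(\rank f_1,\dots,\rank f_n)$, and $\rho(f)\le r$ holds precisely when $\rank(f_i)\le r_i$ for every $i$. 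For the inclusion $\overline{\mathcal C}_r\subseteq D_r$ it suffices to observe that $D_r$ is closed in $V$, being cut out by the vanishing of the $(r_i+1)$-minors of the $f_i$; since $\mathcal C_r\subseteq D_r$ and $\mathcal C$ is closed in $V$, taking closures gives $\overline{\mathcal C}_r\subseteq D_r$.

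The content of the statement is the reverse inclusion $D_r\subseteq\overline{\mathcal C}_r$, i.e. $\mathcal C_s\subseteq\overline{\mathcal C}_r$ for every $s\le r$ (all such strata being nonempty by Corollary~\ref{corollary1}). First I would reduce to an elementary step. By Remark~\ref{remark2}, $\mathcal C_r$ is a single $G$-orbit, so its closure $\overline{\mathcal C}_r$ is $G$-invariant; since each $\mathcal C_s$ is also a single orbit, it is enough to place one point of $\mathcal C_s$ into $\overline{\mathcal C}_r$. Furthermore, joining $r$ to $s$ by a chain that lowers one coordinate by one unit at a time and using that $\mathcal C_{s'}\subseteq\overline{\mathcal C}_{r'}$ implies $\overline{\mathcal C}_{s'}\subseteq\overline{\mathcal C}_{r'}$, it suffices to treat the single step $s=r-e_j$ with $r_j\ge 1$, where $e_j$ is the $j$-th coordinate vector.

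For this step I would exhibit an explicit one-parameter family inside $\mathcal C$. Fix $f\in\mathcal C_r$ and write $V_{j-1}=\ker(f_j)\oplus W$ with a basis $w_1,\dots,w_{r_j}$ of $W$. Define $f_j^{(t)}$ to coincide with $f_j$ on $\ker(f_j)$ and on $w_2,\dots,w_{r_j}$, and to send $w_1\mapsto t\,f_j(w_1)$; leave the other maps unchanged, giving $f^{(t)}=(f_1,\dots,f_{j-1},f_j^{(t)},f_{j+1},\dots,f_n)$. Since $\im(f_j^{(t)})\subseteq\im(f_j)\subseteq\ker(f_{j+1})$ and $f_j^{(t)}$ still annihilates $\ker(f_j)\supseteq\im(f_{j-1})$, both relations $f_{j+1}\circ f_j^{(t)}=0$ and $f_j^{(t)}\circ f_{j-1}=0$ hold for every $t$, so $f^{(t)}\in\mathcal C$. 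For $t\ne 0$ one has $\rank(f_j^{(t)})=r_j$, whence $f^{(t)}\in\mathcal C_r$; at $t=0$ the vector $w_1$ is annihilated, so $\rank(f_j^{(0)})=r_j-1$ and $f^{(0)}\in\mathcal C_{r-e_j}$. Thus $f^{(0)}\in\overline{\mathcal C}_r$, and by $G$-invariance the whole orbit $\mathcal C_{r-e_j}$ lies in $\overline{\mathcal C}_r$, which closes the induction.

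The main obstacle is exactly this degeneration. For a single matrix, the fact that rank-$\le r_i$ maps are limits of rank-$r_i$ maps is the standard statement about determinantal varieties, but here consecutive maps are coupled by the relations $f_{i+1}\circ f_i=0$, so ranks cannot be altered independently. The point that makes the construction go through is that scaling within a complement only shrinks the image and enlarges the kernel of the deformed map, and both effects are automatically compatible with the two neighboring relations; lowering one coordinate at a time, rather than several simultaneously, keeps this compatibility transparent and lets the orbit/closure formalism do the rest.
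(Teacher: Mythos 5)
Your proof is correct, but it takes a genuinely different route from the paper. The paper sets $X_r=\bigcup_{s\le r}\mathcal C_s$, observes (as you do) that the second equality makes $X_r$ closed so that $\overline{\mathcal C}_r\subseteq X_r$, and then gets the reverse inclusion by proving $X_r$ is \emph{irreducible}: it fibers the incidence variety $\tilde X_r=\{(L,f):\im(f_i)\subseteq L_i\subseteq\ker(f_{i+1})\}$ over $\prod_i\operatorname{Grass}(r_i,V_i)$ with vector-space fibers $X_L$, so $\tilde X_r$ and hence its image $X_r$ are irreducible, and the open subset $\mathcal C_r$ must then be dense. You instead prove the containment $\mathcal C_s\subseteq\overline{\mathcal C}_r$ stratum by stratum: reduce to a single rank drop $s=r-e_j$ via a chain and closure-monotonicity, exhibit an explicit one-parameter degeneration $f^{(t)}$ that scales one vector of a complement of $\ker(f_j)$ (correctly checking that both adjacent relations $f_{j+1}\circ f_j^{(t)}=0$ and $f_j^{(t)}\circ f_{j-1}=0$ survive), and then propagate from one limit point to the whole orbit using Remark~\ref{remark2}. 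Your argument is more elementary and makes the specialization concrete, and it also cleanly flags the implicit hypothesis $\mathcal C_r\ne\emptyset$ which the paper leaves tacit; the paper's incidence-variety construction buys more as a byproduct, namely the dimension count of Remark~\ref{remark3} and the resolution of singularities $p_2:\tilde X_r\to X_r$ of Remark~\ref{remark4}, which your degeneration does not produce. Both arguments rely on the orbit description of the strata, yours explicitly and the paper's implicitly through the transitivity proof behind Proposition~\ref{proposition2}.
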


 \begin{proof}
 
 Denote $X_r  = \bigcup_{s \le r} {\mathcal C}_s$.
Since the second equality is clear,  $X_r $ is closed.
It follows that  $\overline {\mathcal C}_r  \subset X_r$. 
To prove the equality, since ${\mathcal C}_r  \subset X_r$ is open,
it would be enough to show that $X_r$ is irreducible.
For this, consider $L = (L_1, \dots, L_n)$ where $L_i \in \text {Grass }(r_i, V_i)$
and denote
$$X_L = \{f = (f_1, \dots, f_n) \in \mathcal C /  \
\text{ im }(f_i) \subset L_i \subset \text{ ker }(f_{i+1}),  \  i = 1, \dots, n \}$$
Consider

$$
\tilde X_r = \{(L, f) / \ f \in X_L\} \  \subset \  G \times \mathcal C
$$

where $G = \prod_{i=0}^n \text {Grass }(r_i, V_i)$. 
The first projection $p_1: \tilde X_r  \to G$ has fibers
$$p_1^{-1}(L) = X_L \cong \Hom(V_0, L_1) \times 
\Hom(V_1/L_1, L_2) \times  \dots 
 \times  \Hom(V_{n-1}/L_{n-1}, V_n)$$
which are vector spaces of constant dimension $\sum_{i=0}^n (d_i - r_i) r_{i+1}$.
It follows that $\tilde X_r$ is irreducible, and hence $X_r = p_2(\tilde X_r)$
is also irreducible, as wanted. 

 \end{proof}

\medskip

\begin{remark}  \label{remark3}

 In the proof above we find again the formula
$$\dim(X_r) = \dim(X_L) + \dim(G) =
\sum_{i=0}^n (d_i - r_i) r_{i} +\sum_{i=0}^n (d_i - r_i) r_{i+1}$$

\end{remark}

\smallskip

\begin{remark}  \label{remark4}

The fact that  $p_1: \tilde X_r  \to G$ is a vector bundle implies that 
$\tilde X_r$ is smooth.
On the other hand, since $p_2 : \tilde X_r \to X_r $ is birational 
(an isomorphism over the open set $\mathcal C_r$), it is a resolution of singularities.

\end{remark}

\medskip

The following two corollaries are immediate consequences of Proposition \ref{proposition3}.

\medskip

\begin{corollary} \label{corollary2} 
${\mathcal C}_s  \subset \overline{\mathcal C}_r$ if and only if $s \le r$.
\end{corollary}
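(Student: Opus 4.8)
The plan is to read off both implications directly from the two descriptions of $\overline{\mathcal C}_r$ supplied by Proposition \ref{proposition3}, namely the stratum decomposition $\overline{\mathcal C}_r = \bigcup_{s \le r} \mathcal C_s$ together with the closed-set characterization $\overline{\mathcal C}_r = \{f \in \mathcal C / \ \rank(f_i) \le r_i, \ i = 1, \dots, n\}$. Each equality handles one direction, so no new geometric input is needed beyond Proposition \ref{proposition3}.

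For the implication $s \le r \Rightarrow \mathcal C_s \subset \overline{\mathcal C}_r$, I would simply note that when $s \le r$ the stratum $\mathcal C_s$ is one of the terms appearing in the union $\bigcup_{t \le r} \mathcal C_t$, so the inclusion is immediate from the first equality in Proposition \ref{proposition3}.

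For the converse, I would assume $\mathcal C_s \ne \emptyset$ and pick a point $f = (f_1, \dots, f_n) \in \mathcal C_s$, so that $\rank(f_i) = s_i$ for every $i$ by the definition of $\mathcal C_s$. The hypothesis $\mathcal C_s \subset \overline{\mathcal C}_r$ then places $f$ in $\overline{\mathcal C}_r$, and the second (closed) description in Proposition \ref{proposition3} yields $\rank(f_i) \le r_i$ for all $i$. Combining these gives $s_i = \rank(f_i) \le r_i$ for each $i$, that is, $s \le r$.

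There is no genuine obstacle here: the only point demanding care is the nonemptiness used in the converse. If $\mathcal C_s = \emptyset$ the inclusion $\mathcal C_s \subset \overline{\mathcal C}_r$ holds vacuously while $s \le r$ may fail, so the statement is understood to range over the nonempty strata, which by Remark \ref{remark2} are exactly the orbits of $G$. I would record this convention explicitly rather than treat it as a difficulty.
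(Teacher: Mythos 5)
Your argument is exactly the paper's intended one: the paper gives no separate proof, stating only that this corollary is an immediate consequence of Proposition \ref{proposition3}, and both directions follow from the two descriptions of $\overline{\mathcal C}_r$ precisely as you read them off. Your remark that the converse requires $\mathcal C_s \ne \emptyset$ is a legitimate small caveat the paper leaves implicit.
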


\medskip

\begin{corollary}\label{corollary3}
$\overline {\mathcal C}_r  \cap \overline {\mathcal C}_s  = \overline {\mathcal C}_t$
where $t_i = \text {min }(r_i, s_i)$ for all $i = 1, \dots, n$.
\end{corollary}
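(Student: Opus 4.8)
The plan is to push all the geometric content into Proposition \ref{proposition3} and then reduce the identity to the elementary fact that an intersection of two rank bounds is again a rank bound, with exponent the minimum of the two.

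First I would apply Proposition \ref{proposition3} to each of the three closures, rewriting them as loci cut out by rank inequalities:
$$\overline{\mathcal C}_r = \{f \in \mathcal C : \rank(f_i) \le r_i,\ i = 1, \dots, n\},$$
and similarly for $s$ and for $t$. Intersecting the first two descriptions gives
$$\overline{\mathcal C}_r \cap \overline{\mathcal C}_s = \{f \in \mathcal C : \rank(f_i) \le r_i \text{ and } \rank(f_i) \le s_i \text{ for } i = 1, \dots, n\}.$$

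Next I would observe that, for each fixed index $i$, the conjunction $\rank(f_i) \le r_i$ and $\rank(f_i) \le s_i$ holds precisely when $\rank(f_i) \le \min(r_i, s_i) = t_i$. Substituting this into the displayed locus identifies the intersection with $\{f \in \mathcal C : \rank(f_i) \le t_i,\ i = 1, \dots, n\}$, which is $\overline{\mathcal C}_t$ by one last appeal to Proposition \ref{proposition3}.

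I do not expect a genuine obstacle here, since the work has all been done in Proposition \ref{proposition3}; the only point that warrants a line of care is that Proposition \ref{proposition3} should be applicable to $\overline{\mathcal C}_t$, i.e. that the stratum $\mathcal C_t$ is nonempty. This is immediate: from $t_i = \min(r_i, s_i) \le r_i$ we have $t \le r$, so Corollary \ref{corollary1} (together with the standing assumption $\mathcal C_r \ne \emptyset$) gives $\mathcal C_t \ne \emptyset$. With this in hand the three invocations of Proposition \ref{proposition3} are all legitimate and the chain of equalities above completes the proof.
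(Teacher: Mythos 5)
Your proposal is correct and is exactly the argument the paper intends: the paper states this corollary as an immediate consequence of Proposition \ref{proposition3}, namely intersecting the two rank-inequality descriptions and noting that $\rank(f_i)\le r_i$ and $\rank(f_i)\le s_i$ together are equivalent to $\rank(f_i)\le \min(r_i,s_i)$. Your extra remark checking that $\mathcal C_t\ne\emptyset$ via Corollary \ref{corollary1} is a sensible piece of diligence that the paper leaves implicit.
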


\medskip

\begin{definition} \label{} 
For $d = (d_0, \dots, d_n) \in \mathbb N^{n+1}$ let
$$R = R(d) = \{(r_1, \dots, r_{n}) \in \mathbb N^{n}/ \ r_1  \le d_0, \  r_{i+1} + r_i  \le d_i   \  (1 \le i \le n-1),   \ r_n  \le d_n \}$$ 
\end{definition}

\medskip
\noindent
We consider $\mathbb N^{n}$ ordered via $r \le s$ if $r_i \le s_i$ for all $i$; the finite set $R$ has the induced order. Notice that $R$ is finite since it is contained in the box $\{(r_1, \dots, r_{n}) \in \mathbb N^{n}/ \ 0 \le r_i  \le d_i, \  i = 1, \dots, n\}$. 

\medskip

\begin{proposition}  \label{proposition4}
With the notation above, the irreducible components of the variety of complexes
$\mathcal C = \mathcal C(V_0, \dots, V_n)$ are the $\overline {\mathcal C}_r$ with
$r \in R(d_0, \dots, d_n)$ a maximal element.

\end{proposition}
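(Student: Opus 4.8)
The plan is to exhibit $\mathcal C$ as a finite union of the closed irreducible sets $\overline{\mathcal C}_r$ and then prune it down to an irredundant union indexed by the maximal elements of $R$. First I would observe that every complex $f \in \mathcal C$ has a well-defined rank vector $r = (\rank(f_1), \dots, \rank(f_n))$, so $\mathcal C$ is the disjoint union of the locally closed strata $\mathcal C_r$. By Proposition \ref{proposition2} a), $\mathcal C_r \ne \emptyset$ precisely when $r_{i+1} + r_i \le d_i$ for $0 \le i \le n$, and unwinding the conventions $r_0 = r_{n+1} = 0$ this is exactly the condition defining $R = R(d_0, \dots, d_n)$. Hence $\mathcal C = \bigcup_{r \in R} \mathcal C_r$ is a finite decomposition into nonempty strata.

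Next, by Proposition \ref{proposition2} b) each nonempty $\mathcal C_r$ is irreducible, so its closure $\overline{\mathcal C}_r$ is irreducible and closed; and since $\mathcal C_r \subset \overline{\mathcal C}_r \subset \mathcal C$ we get $\mathcal C = \bigcup_{r \in R} \overline{\mathcal C}_r$. I would then record that $s \le r$ implies $\overline{\mathcal C}_s \subset \overline{\mathcal C}_r$: indeed $\mathcal C_s \subset \overline{\mathcal C}_r$ by Corollary \ref{corollary2}, and $\overline{\mathcal C}_r$ is closed, so it also contains the closure of $\mathcal C_s$. Because $R$ is finite, every $r \in R$ lies below some maximal element, so each $\overline{\mathcal C}_r$ with $r$ non-maximal is absorbed by a $\overline{\mathcal C}_{r'}$ with $r' \ge r$ maximal and may be discarded, leaving $\mathcal C = \bigcup_{r \ \text{maximal in} \ R} \overline{\mathcal C}_r$.

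It remains to check that this union is irredundant, that is, that no piece is contained in the union of the others. Since each piece is irreducible, it suffices to show that for distinct maximal $r, r'$ one has $\overline{\mathcal C}_r \not\subset \overline{\mathcal C}_{r'}$. Here Corollary \ref{corollary2} does the work: $\overline{\mathcal C}_r \subset \overline{\mathcal C}_{r'}$ would force $\mathcal C_r \subset \overline{\mathcal C}_{r'}$, hence $r \le r'$, contradicting that distinct maximal elements are incomparable. A finite union of irreducible closed sets, none contained in another, is exactly the decomposition into irreducible components, so the components of $\mathcal C$ are the $\overline{\mathcal C}_r$ with $r$ maximal in $R$, as claimed.

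Essentially all the geometric content is already packaged in Propositions \ref{proposition2} and \ref{proposition3} and Corollary \ref{corollary2}, so I do not expect a serious obstacle. The only points requiring care are bookkeeping: matching the inequalities characterizing the nonempty strata with the combinatorial definition of the index poset $R$, and verifying that the poset order on $R$ is the one induced from $\mathbb N^n$ so that the incomparability of distinct maximal elements really rules out redundancy via Corollary \ref{corollary2}.
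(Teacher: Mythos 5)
Your proposal is correct and follows essentially the same route as the paper: decompose $\mathcal C$ into the nonempty strata $\mathcal C_r$ for $r \in R$, pass to the closures $\overline{\mathcal C}_r$, discard non-maximal indices, and use Corollary \ref{corollary2} to see that the pieces indexed by distinct maximal elements are mutually non-contained. Your write-up just spells out in more detail the bookkeeping that the paper compresses into a single chain of equalities.
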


 \begin{proof}
 
 From the previous Propositions, we have the equalities
$$\mathcal C = \bigcup_{r \in R} \mathcal C_r = \bigcup_{r \in R} \overline {\mathcal C}_r =
\bigcup_{r \in R^+} \overline {\mathcal C}_r$$
where $R^+$ denotes the set of maximal elements of $ R $. The result follows because
we know that each
$ \overline{\mathcal C}_r  $ is irreducible and there are no inclusion relations 
among the $ \overline{\mathcal C}_r  $ for $ r \in R^+ $ (see  Corollary \ref{corollary2}). 

 \end{proof}
 

\medskip

\subsection {Morphisms of complexes. Tangent space of the variety of complexes.}  \label{subsection7} Now we would like to compute the dimension of the tangent space of a variety of complexes at each point.

\medskip
\noindent
With the notation of \ref{subsection1} we consider complexes $f \in \mathcal C(V_0, \dots, V_n)$ and $f' \in \mathcal C(V'_0, \dots, V'_n)$ (the vector spaces $V_i$ and $V'_i$ are not necessarily the same, but the lenght $n$ we may assume is the same). We denote
$$\Hom_{\mathcal C}(f, f')$$
the set of morphisms of complexes from $f$ to $f'$, that is, collections of linear maps $g_i: V_i \to V'_i$ for $i=0, \dots, n$, such that $g_i \circ f_i = f'_i \circ g_{i-1}$  for 
$i=1, \dots, n$. It is a vector subspace of $\prod_{i=0}^n \Hom_K(V_{i}, V'_i)$, and we would like to calculate its dimension.

\medskip
\noindent
For this particular purpose and for its independent interest, we recall the following from \cite{B} ($\S 2 - 5.$ Complexes scind\'es): 

\medskip
\noindent
For $f \in \mathcal C(V_0, \dots, V_n)$, denote as in \ref{subsection1}
$$B_i(f)  = f_i(V_{i-1}) \subset Z_i(f) = \text{ker }(f_{i+1}) \subset V_i$$
Since we are working with vector spaces, we may choose linear subspaces $\bar B_i$ and $\bar H_i$ of $V_i$ such that
$$V_i = Z_i(f) \oplus \bar B_i  \ \ \  \text{and }   \ \ \    Z_i(f) = B_i(f) \oplus  \bar H_i$$
Then $V_i = B_i(f) \oplus  \bar H_i  \oplus \bar B_i$ and clearly
$f_{i+1}$ takes $\bar B_i$ isomorphically onto $B_{i+1}(f)$. Notice also that 
$$\dim (\bar B_i) = \dim (B_{i+1}(f)) = \rank(f_{i+1}) = r_{i+1}(f)$$
and
$$ \dim (\bar H_i) =  \dim (Z_i(f) / B_i(f)) = h_i(f)$$
\noindent
Next, define the following complexes:

\medskip
\noindent
$\bar H(i)$ the complex of lenght zero consisting of the vector space $\bar H_i$ in degree $i$, the vector space zero in degrees $\ne i$, and all differentials equal to zero.

\medskip
\noindent
$\bar B(i)$ the complex of lenght one consisting of the vector space $\bar B_{i-1}$ in degree $i-1$, the vector space $B_i(f)$ in degree $i$, with the map 
$f_i: \bar B_{i-1} \to B_i(f)$, and zeroes everywhere else.

\medskip

\begin{proposition}  \label{proposition5}
With the notation just introduced, $\bar H(i)$ and $\bar B(i)$ are subcomplexes of $f$ and  we have a direct sum decomposition of complexes:
$$ f =  \bigoplus_{0 \le i \le n} \bar H(i)  \ \oplus \  \bigoplus_{0 \le i \le n} \bar B(i)$$
\end{proposition}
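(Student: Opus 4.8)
The plan is to verify the decomposition degree by degree, reducing everything to the pointwise splitting $V_i = B_i(f) \oplus \bar H_i \oplus \bar B_i$ recorded just before the statement, together with the behaviour of the differentials $f_{i+1}$ on each summand.

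First I would check that $\bar H(i)$ and $\bar B(i)$ are genuinely subcomplexes of $f$. Recall that a family of subspaces $W_j \subset V_j$ is a subcomplex precisely when $f_{j+1}(W_j) \subset W_{j+1}$ for every $j$. For $\bar H(i)$, whose only nonzero term is $\bar H_i$ in degree $i$, this reduces to $f_{i+1}(\bar H_i) = 0$, which is immediate since $\bar H_i \subset Z_i(f) = \ker(f_{i+1})$ by construction. For $\bar B(i)$, with terms $\bar B_{i-1}$ in degree $i-1$ and $B_i(f)$ in degree $i$, I must verify $f_i(\bar B_{i-1}) \subset B_i(f)$ and $f_{i+1}(B_i(f)) = 0$. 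The former holds because $f_i$ carries $\bar B_{i-1}$ isomorphically onto $B_i(f)$, as already noted; the latter because $B_i(f) \subset Z_i(f) = \ker(f_{i+1})$, i.e. $f_{i+1} \circ f_i = 0$.

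Next I would identify the degree-$i$ term of the proposed right-hand side by collecting all contributions landing in degree $i$: the summand $\bar H(i)$ contributes $\bar H_i$, the summand $\bar B(i)$ contributes its degree-$i$ term $B_i(f)$, and the summand $\bar B(i+1)$ contributes its degree-$i$ term $\bar B_i$; every other summand vanishes in degree $i$. Hence the degree-$i$ part of the total is $B_i(f) \oplus \bar H_i \oplus \bar B_i$, which is exactly $V_i$ by the chosen splitting. This establishes the underlying graded vector space isomorphism and shows the internal sum is direct.

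Finally I would confirm that the ambient differential $f_{i+1} \colon V_i \to V_{i+1}$ coincides with the direct sum of the differentials of the subcomplexes under this identification. On the right-hand side the differential out of degree $i$ is zero on $\bar H_i$ (from $\bar H(i)$) and on $B_i(f)$ (from $\bar B(i)$), and equals the isomorphism $\bar B_i \to B_{i+1}(f)$ induced by $f_{i+1}$ (from $\bar B(i+1)$). Matching these against $f_{i+1}$ restricted to each summand, I note that $f_{i+1}$ annihilates both $\bar H_i$ and $B_i(f)$, since both lie in $\ker(f_{i+1})$, and carries $\bar B_i$ isomorphically onto $B_{i+1}(f)$, exactly as required. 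The argument is in the end pure bookkeeping; the only point demanding genuine care is keeping the degree shifts straight---in particular that the degree-$i$ copy of $\bar B_i$ arises from $\bar B(i+1)$, not from $\bar B(i)$, with the boundary terms handled by the standing conventions.
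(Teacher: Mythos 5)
Your proof is correct and is essentially the argument the paper has in mind: the paper's proof simply says ``clear from the discussion above'' (citing Bourbaki), and what you have written is precisely that discussion carried out in detail --- the degree-by-degree bookkeeping with the splitting $V_i = B_i(f)\oplus \bar H_i\oplus \bar B_i$ and the observation that $f_{i+1}$ kills $Z_i(f)$ and carries $\bar B_i$ isomorphically onto $B_{i+1}(f)$. In particular you correctly track that the degree-$i$ copy of $\bar B_i$ comes from the summand $\bar B(i+1)$, which is the one point where care is needed.
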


\begin{proof}
 Clear from the discussion above; see also \cite{B}, loc. cit.
\end{proof}

\medskip
\medskip
\noindent
Now we are ready for the calculation of $\dim_K \Hom_{\mathcal C}(f, f')$. 

\medskip

\begin{proposition}  \label{dimension hom}
With the previous notation, we have:

\begin{eqnarray*}
\dim_K \Hom_{\mathcal C}(f, f') &=& \sum_i h_i h'_i + h_i r'_i + r_i h'_{i-1} + r_i r'_i + r_i r'_{i-1} \\
 &=& \sum_i h_i (h'_i + r'_i) + r_i d'_{i-1} 
\end{eqnarray*}

\end{proposition}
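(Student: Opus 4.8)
The plan is to reduce everything to the splitting of Proposition \ref{proposition5} and then to compute $\Hom$ between the elementary building blocks. First I would recall that $\Hom_{\mathcal C}(-,-)$ is additive in each variable: a morphism out of a direct sum of complexes is a ``column'' of morphisms and a morphism into a direct sum is a ``row'', so for $C = \bigoplus_\alpha C_\alpha$ and $C' = \bigoplus_\beta C'_\beta$ one has $\Hom_{\mathcal C}(C, C') = \bigoplus_{\alpha,\beta} \Hom_{\mathcal C}(C_\alpha, C'_\beta)$, and in particular the dimensions add. Feeding in the decompositions $f = \bigoplus_i \bar H(i) \oplus \bigoplus_i \bar B(i)$ and $f' = \bigoplus_i \bar H'(i) \oplus \bigoplus_i \bar B'(i)$ of Proposition \ref{proposition5}, the quantity $\dim_K \Hom_{\mathcal C}(f, f')$ becomes a finite sum of $\Hom$'s between the four families $\bar H(i), \bar B(i), \bar H'(j), \bar B'(j)$.

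Next I would evaluate each elementary $\Hom$ space. A morphism of complexes $g = (g_k)$ satisfies $g_k \circ d_k = d'_k \circ g_{k-1}$ in every degree, so the only possibly nonzero components $g_k$ are those in degrees where both source and target are nonzero, and the commutativity relations constrain these further. Recall that $\bar H(i)$ sits in degree $i$ with zero differential, while $\bar B(i)$ is the two-term complex $\bar B_{i-1} \xrightarrow{\sim} B_i(f)$ in degrees $i-1, i$ whose differential is an isomorphism of $r_i$-dimensional spaces. A short case check then shows that $\Hom_{\mathcal C}(C_\alpha, C'_\beta)$ vanishes for every pair except the following five, with dimensions
$$\dim \Hom_{\mathcal C}(\bar H(i), \bar H'(i)) = h_i h'_i, \qquad \dim \Hom_{\mathcal C}(\bar H(i), \bar B'(i)) = h_i r'_i,$$
$$\dim \Hom_{\mathcal C}(\bar B(i), \bar H'(i-1)) = r_i h'_{i-1}, \qquad \dim \Hom_{\mathcal C}(\bar B(i), \bar B'(i)) = r_i r'_i, \qquad \dim \Hom_{\mathcal C}(\bar B(i), \bar B'(i-1)) = r_i r'_{i-1}.$$
Summing over $i$ produces the first displayed formula.

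The vanishing cases are where the care lies, and I expect the main obstacle to be the bookkeeping of the commutativity relations, particularly the \emph{asymmetry} between the two neighbouring shifts of $\bar B$-type complexes. For instance, a morphism $\bar B(i) \to \bar B'(i+1)$ has a single candidate component in degree $i$, which is forced to zero because the differential $\bar B_{i-1} \xrightarrow{\sim} B_i$ is surjective; whereas a morphism $\bar B(i) \to \bar B'(i-1)$ has its degree-$(i-1)$ component entirely unconstrained, contributing $r_i r'_{i-1}$. The same one-sided mechanism explains why $\bar H(i)$ maps only to $\bar H'(i)$ and $\bar B'(i)$ (not to neighbouring shifts), and why $\bar B(i)$ maps nontrivially to $\bar H'(i-1)$ but not to $\bar H'(i)$, the latter being killed again by surjectivity of the differential of $\bar B(i)$.

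Finally, for the second equality I would regroup the five terms as $\sum_i h_i(h'_i + r'_i) + r_i(r'_{i-1} + r'_i + h'_{i-1})$ and invoke the dimension relation of Proposition \ref{proposition1}. Since $b'_i = \rank(f'_i) = r'_i$, that relation reads $d'_{i-1} = b'_i + b'_{i-1} + h'_{i-1} = r'_i + r'_{i-1} + h'_{i-1}$, so that $r_i(r'_{i-1} + r'_i + h'_{i-1}) = r_i d'_{i-1}$. This gives $\sum_i h_i(h'_i + r'_i) + r_i d'_{i-1}$, as claimed.
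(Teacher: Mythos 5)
Your proposal is correct and follows essentially the same route as the paper: split $f$ and $f'$ into the elementary complexes $\bar H(i)$ and $\bar B(i)$ via Proposition \ref{proposition5}, use additivity of $\Hom_{\mathcal C}$ in both arguments, compute the five nonvanishing elementary $\Hom$ spaces (correctly handling the delicate neighbouring-shift cases via surjectivity/injectivity of the differential of a $\bar B$-type complex), and finish with the relation $d'_{i-1}=r'_i+r'_{i-1}+h'_{i-1}$ from Proposition \ref{proposition1}. No gaps.
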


\begin{proof}
 
We may decompose $f$ and $f'$ as in Proposition \ref{proposition5}:

\begin{eqnarray*}
\Hom_{\mathcal C}(f, f') & = & \Hom_{\mathcal C}(\oplus_i \bar H(i)  \oplus \oplus_i \bar B(i), \oplus_i \bar H(i)'  \oplus \oplus_i \bar B(i)') \\
 & = & \oplus_{i,j} \Hom_{\mathcal C}(\bar H(i), \bar H(j)') \ \oplus \ \oplus_{i,j} \Hom_{\mathcal C}(\bar H(i), \bar B(j)')  \oplus  \\
&  & \oplus_{i,j} \Hom_{\mathcal C}(\bar B(i), \bar H(j)') \ \oplus \ \oplus_{i,j} \Hom_{\mathcal C}(\bar B(i), \bar B(j)')
\end{eqnarray*}

\noindent
It is easy to check the following:

\begin{eqnarray*}
\Hom_{\mathcal C}(\bar H(i), \bar H(j)') &=& 0  \  \  \text{for } i \ne j \\
\Hom_{\mathcal C}(\bar H(i), \bar H(i)') &=& \Hom_{K}(\bar H_i, \bar H'_i) 
\end{eqnarray*}

\begin{eqnarray*}
\Hom_{\mathcal C}(\bar H(i), \bar B(j)') &=& 0  \  \  \text{for } i \ne j \\
\Hom_{\mathcal C}(\bar H(i), \bar B(i)') &=& \Hom_{K}(\bar H_i, \bar B'_i) 
\end{eqnarray*}

(the case $j=i+1$ requires special attention)

\begin{eqnarray*}
\Hom_{\mathcal C}(\bar B(i), \bar H(j)') &=& 0  \  \  \text{for } i-1 \ne j \\
\Hom_{\mathcal C}(\bar B(i), \bar H(i-1)') &=& \Hom_{K}(\bar B_{i-1}, \bar H'_{i-1}) \cong \Hom_{K}(\bar B_i(f), \bar H'_{i-1})
\end{eqnarray*}

(the case $j=i$ requires special attention)

\begin{eqnarray*}
\Hom_{\mathcal C}(\bar B(i), \bar B(i)') & \cong & \Hom_{K}(B_i(f), B'_i(f)) \\
\Hom_{\mathcal C}(\bar B(i), \bar B(i-1)') &=& \Hom_{K}(\bar B_{i-1}, B'_{i-1})  \cong \Hom_{K}(B_i(f), B'_{i-1}) \\
\Hom_{\mathcal C}(\bar B(i), \bar B(j)') &=& 0  \  \  \text{otherwise } 
\end{eqnarray*}

\medskip

Taking dimensions we obtain the stated formula.

\end{proof}

\noindent
Now we deduce the dimension of the tangent space to a variety of complexes at any point.

\medskip

\begin{proposition}  \label{tangent space}
For $f \in \mathcal C = \mathcal C(V_0, \dots, V_n)$ we have a canonical isomorphism
$$T\mathcal C(f) = \Hom_{\mathcal C}(f, f(1)) $$
where $T\mathcal C(f)$ is the Zariski tangent space to $\mathcal C$ at the point $f$, and $f(1)$ denotes de shifted complex 
$f(1)_i = (-1)^i f_{i+1}, \ \ i = -1, 0, \dots, n$. 
\end{proposition}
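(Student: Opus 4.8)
The plan is to compute $T\mathcal C(f)$ directly from the quadratic equations defining $\mathcal C \subset V = \prod_{i=1}^n \Hom_K(V_{i-1},V_i)$, and then to match, index by index, the resulting linear conditions with the commutation relations defining a morphism of complexes $f \to f(1)$. First I would package the defining equations into a single map $\mu\colon V \to \prod_{i=1}^{n-1}\Hom_K(V_{i-1},V_{i+1})$ given by $\mu(f)_i = f_{i+1}\circ f_i$, so that $\mathcal C = \mu^{-1}(0)$ and $T\mathcal C(f) = \ker(d\mu_f)$. Each component $\mu_i$ is bilinear, so its differential in the direction $\dot f = (\dot f_1,\dots,\dot f_n)$ is $(d\mu_f(\dot f))_i = f_{i+1}\dot f_i + \dot f_{i+1} f_i$, whence
$$T\mathcal C(f) = \{\dot f \in V \mid f_{i+1}\dot f_i + \dot f_{i+1} f_i = 0, \ i=1,\dots,n-1\}.$$

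Next I would unwind the right-hand side. By definition of the shift, $f(1)$ has degree-$i$ space $V_{i+1}$ and differential $f(1)_i = (-1)^i f_{i+1}\colon V_i \to V_{i+1}$; in particular its degree-$n$ space is $V_{n+1}=0$. Applying the definition of $\Hom_{\mathcal C}$ from \ref{subsection7} with $f' = f(1)$, an element $g$ is a family of maps $g_i\colon V_i \to V_{i+1}$ (for $i=0,\dots,n-1$, with $g_n=0$ forced) subject to $g_i f_i = f(1)_i\, g_{i-1} = (-1)^i f_{i+1}\,g_{i-1}$. The relation at $i=n$ is vacuous, since $g_n=0$ and $f_{n+1}=0$, so the genuine constraints run over $i=1,\dots,n-1$, exactly as for $T\mathcal C(f)$; likewise the $n$ free components $g_0,\dots,g_{n-1}$ match the $n$ components $\dot f_1,\dots,\dot f_n$.

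Finally I would exhibit the isomorphism by reindexing and rescaling. Both $\dot f$ and $g$ amount to $n$ linear maps of the shape $V_{i-1}\to V_i$, so I set $\Phi(\dot f)_{i-1} = c_i\,\dot f_i$ for units $c_i\in\{\pm1\}$, which already makes each component of $\Phi$ a vector-space isomorphism. It then remains to choose the $c_i$ so that the tangent equation $f_{i+1}\dot f_i + \dot f_{i+1} f_i = 0$ is carried onto the morphism equation $g_i f_i = (-1)^i f_{i+1} g_{i-1}$; substituting $g_{i-1}=c_i\dot f_i$ and $g_i = c_{i+1}\dot f_{i+1}$ shows that the recursion $c_{i+1} = (-1)^{i+1}c_i$, normalized by $c_1=1$, does the job. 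Since $\Phi$ is then a bijective linear map carrying one defining system onto the other, it is the desired isomorphism, and it is manifestly canonical, as its construction involves no auxiliary choices.

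I expect the sign bookkeeping in this last step to be the only delicate point: one must verify that the factor $(-1)^i$ built into the definition of $f(1)$ is precisely what turns the (asymmetric) commutativity relations of a chain map, after rescaling by the $c_i$, into the symmetric linearized relations $f_{i+1}\dot f_i + \dot f_{i+1} f_i = 0$. Everything else is a routine unwinding of the definitions of the Zariski tangent space and of $\Hom_{\mathcal C}$.
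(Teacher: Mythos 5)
Your proposal is correct and follows essentially the same route as the paper: linearize the bilinear defining equations $f_{i+1}\circ f_i=0$ to get $T\mathcal C(f)=\{\dot f \mid f_{i+1}\dot f_i+\dot f_{i+1}f_i=0\}$, then identify this with $\Hom_{\mathcal C}(f,f(1))$. The only difference is that you make explicit the reindexing $g_{i-1}\leftrightarrow \dot f_i$ and the sign rescaling $c_{i+1}=(-1)^{i+1}c_i$ needed to reconcile the linearized equations with the chain-map conditions for the convention $f(1)_i=(-1)^i f_{i+1}$ --- bookkeeping the paper compresses into the phrase ``this means precisely that $g\in\Hom_{\mathcal C}(f,f(1))$.''
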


\begin{proof}
Since $\mathcal C$ is an algebraic subvariety of the vector space $V = \prod_{i=1}^n \Hom_K(V_{i-1}, V_i)$, an element of 
$T\mathcal C(f)$ is a $g = (g_1, \dots, g_n) \in V$ such that $f + \epsilon g$ satisfies the equations defining $\mathcal C$ (i. e. a $K[\epsilon]$-valued point of 
$\mathcal C$), that is,
$$(f + \epsilon g)_{i+1} \circ (f + \epsilon g)_i = 0,  \  \ i = 1, \dots, n -1 $$
which is equivalent to
$$f_{i+1} \circ g_i  +  g_{i+1} \circ f_i  = 0,  \  \  i = 1, \dots, n -1 $$
and this means precisely that $g \in \Hom_{\mathcal C}(f, f(1))$.
\end{proof}

\medskip

\begin{corollary}  \label{dim tangent space}
For $f \in \mathcal C = \mathcal C(V_0, \dots, V_n)$,
\begin{eqnarray*}
\dim_K T\mathcal C(f) &=&  \sum_i h_i (h_{i+1} + r_{i+1}) + r_i d_{i} \\
&=&  \sum_i (d_i - r_{i} - r_{i+1}) (d_{i+1} - r_{i+2}) + r_i d_{i} 
\end{eqnarray*}
\end{corollary}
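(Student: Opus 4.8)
The plan is to combine the two preceding results directly: Proposition \ref{tangent space} identifies $T\mathcal C(f)$ with $\Hom_{\mathcal C}(f, f(1))$, and Proposition \ref{dimension hom} provides a closed formula for $\dim_K \Hom_{\mathcal C}(f, f')$ in terms of the invariants $h_i, r_i, d_i$ of $f$ and $h'_i, r'_i, d'_i$ of $f'$. So the only real work is to compute the invariants of the shifted complex $f' = f(1)$ and substitute them in.

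First I would record how the numerical invariants transform under the shift. Since $f(1)$ is the complex whose space in degree $i$ is $V_{i+1}$ and whose differential $f(1)_i = (-1)^i f_{i+1}$ differs from $f_{i+1}$ only by a sign, neither the rank nor the homology is affected by that sign. Hence $d'_i = d_{i+1}$, $r'_i = \rank(f(1)_i) = \rank(f_{i+1}) = r_{i+1}$, and $h'_i = \dim_K H_i(f(1)) = h_{i+1}$. Substituting these into the second (more compact) expression of Proposition \ref{dimension hom}, namely $\dim_K \Hom_{\mathcal C}(f, f') = \sum_i h_i (h'_i + r'_i) + r_i d'_{i-1}$, and using $h'_i = h_{i+1}$, $r'_i = r_{i+1}$, $d'_{i-1} = d_i$, gives at once $\dim_K T\mathcal C(f) = \sum_i h_i (h_{i+1} + r_{i+1}) + r_i d_i$, which is the first displayed equality.

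For the second equality I would rewrite everything purely in terms of the $d_i$ and $r_i$ using the relation $h_i = d_i - r_i - r_{i+1}$; this is Proposition \ref{proposition1} a) with $b_i = r_i$, valid because $b_i = \dim_K B_i = \rank(f_i)$. In particular $h_{i+1} + r_{i+1} = d_{i+1} - r_{i+2}$, so each summand $h_i (h_{i+1} + r_{i+1})$ becomes $(d_i - r_i - r_{i+1})(d_{i+1} - r_{i+2})$, yielding the stated formula. I do not expect any genuine obstacle here; the only points requiring care are the index bookkeeping in the shift, in particular retaining the boundary conventions $b_{n+1} = r_{n+1} = 0$ so that the sums close correctly at the ends, and confirming once and for all that the sign $(-1)^i$ in the definition of $f(1)$ plays no role in a dimension count, both of which are routine.
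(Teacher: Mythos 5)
Your proposal is correct and follows exactly the paper's own route: identify $T\mathcal C(f)$ with $\Hom_{\mathcal C}(f, f(1))$ via Proposition \ref{tangent space}, then substitute $d'_i = d_{i+1}$, $r'_i = r_{i+1}$, $h'_i = h_{i+1}$ into Proposition \ref{dimension hom}. Your added remarks on the irrelevance of the sign $(-1)^i$ and the use of $h_i = d_i - r_i - r_{i+1}$ for the second equality are just the routine details the paper leaves implicit.
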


\begin{proof}
From Proposition  \ref{tangent space} we know that $\dim_K T\mathcal C(f) = \dim_K \Hom_{\mathcal C}(f, f(1))$.
Next we apply Proposition \ref{dimension hom} with $f' = f(1)$, that is, replacing $d'_i = d_{i+1}$, $r'_i = r_{i+1}$, $h'_i = h_{i+1}$,  to obtain the result.

\end{proof}


\subsection {Varieties of exact complexes.}  \label{subsection8}

Now we apply the previous results to the case of exact complexes.

\medskip

Let us fix $(d_0, \dots, d_n) \in \mathbb N^n$ so that

$$
\chi_j(d) = (-1)^j  \sum_{i=0}^j (-1)^i d_i \ge 0, \  \ j=1, \dots, n-1 $$

$$
\chi_n(d) = (-1)^n  \sum_{i=0}^n (-1)^i d_i = 0
$$

Denoting $ \chi = \chi(d) = (\chi_1(d), \dots, \chi_n(d)) \in \mathbb N^n$, let us
consider the variety $ \mathcal C_{\chi} $ of complexes of rank $ \chi $ as in Definition \ref{definition1} .
Since $ \chi_{i}(d)  + \chi_{i+1}(d) = d_i $ for all $ i $, it follows from Proposition \ref{proposition2} that
$ \mathcal C_{\chi} $ is non-empty of dimension 
$$\frac 1 2 \sum_{i=0}^n  d_i ^2 $$
It follows from Proposition \ref{proposition1}  that any complex $ f \in \mathcal C_{\chi} $ is exact.
Also, since $ \chi \in R $ is clearly maximal (see Proposition \ref{proposition4}), 
$ \overline {\mathcal C}_{\chi} $ is an irreducible component of $ \mathcal C $.
Let us denote
$$ \mathcal E = \mathcal E(d_0, \dots, d_n) = \overline {\mathcal C}_{\chi} = 
\{f \in \mathcal C / \ \rank(f_i) \le \chi_i ,  \ \  i = 1, \dots, n \} $$
the closure of the variety $ {\mathcal C}_{\chi} $ of exact complexes. 
Denote also, for $ i=1, \dots, n $
$$ \chi^i = \chi - e_i = (\chi_1, \dots, \chi_{i-1}, \chi_{i} -1, \chi_{i+1}, \dots, \chi_n) $$
and
$$ \Delta_i = \overline {\mathcal C}_{\chi^i} = 
\{f \in \mathcal C / \ \rank(f) \le \chi - e_i \} $$
the variety of complexes where the $ i$-th matrix drops rank by one.
\medskip
\begin{proposition} \label{proposition6} The codimension of $ \Delta_i $ in $ \mathcal E $ is equal to one, and
$$ \mathcal E = {\mathcal C}_{\chi} \cup \Delta_1 \cup \dots \cup \Delta_n$$
\end{proposition}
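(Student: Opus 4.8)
The plan is to derive both claims from the stratification $\overline{\mathcal C}_r = \bigcup_{s \le r} \mathcal C_s$ of Proposition \ref{proposition3} together with the dimension formula of Proposition \ref{proposition2} b). Recall that $\mathcal E = \overline{\mathcal C}_\chi = \bigcup_{s \le \chi} \mathcal C_s$ is a disjoint union of the locally closed strata $\mathcal C_s$, all nonempty by Corollary \ref{corollary1}, and that by Corollary \ref{corollary2} one has $\mathcal C_s \subset \overline{\mathcal C}_{\chi^i} = \Delta_i$ precisely when $s \le \chi^i = \chi - e_i$. I would establish the set-theoretic decomposition first and compute the codimension afterwards.

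For the decomposition I would separate the open dense stratum $\mathcal C_\chi$ from its complement $\bigcup_{s < \chi} \mathcal C_s$. The elementary point is that whenever $s \le \chi$ with $s \ne \chi$, there is an index $i$ with $s_i < \chi_i$; this forces $\chi_i \ge 1$, so $\chi^i \in \mathbb N^n$, and $s \le \chi - e_i = \chi^i$. Hence $\mathcal C_s \subset \Delta_i$ by Corollary \ref{corollary2}, giving $\bigcup_{s < \chi} \mathcal C_s \subset \Delta_1 \cup \dots \cup \Delta_n$. Conversely $\chi^i \le \chi$ yields $\Delta_i = \overline{\mathcal C}_{\chi^i} \subset \overline{\mathcal C}_\chi = \mathcal E$, so the two inclusions combine to
$$\mathcal E = \mathcal C_\chi \cup \Delta_1 \cup \dots \cup \Delta_n.$$
(If $\chi_i = 0$ the symbol $\Delta_i$ is empty and simply drops out of the union.)

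For the codimension I would first note that $\chi^i \in R$: lowering a single coordinate of $\chi$ by one only relaxes the inequalities $r_{j+1} + r_j \le d_j$ cutting out $R$, so Proposition \ref{proposition2} applies, $\mathcal C_{\chi^i}$ is smooth and irreducible, and $\dim \Delta_i = \dim \mathcal C_{\chi^i}$. I would then use the homological form $\dim \mathcal C_r = \tfrac12 \sum_i (d_i^2 - h_i^2)$. For $r = \chi$ the complex is exact, so all $h_i = 0$ and $\dim \mathcal E = \tfrac12 \sum_i d_i^2$. For $r = \chi^k$, the identities $h_i = d_i - r_i - r_{i+1}$ together with $\chi_j + \chi_{j+1} = d_j$ (convention $\chi_0 = \chi_{n+1} = 0$) show that decreasing $\chi_k$ by one creates exactly $h_{k-1} = h_k = 1$ and leaves every other $h_i = 0$. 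Thus $\sum_i h_i^2$ increases by $2$ and
$$\dim \Delta_k = \tfrac12 \sum_i d_i^2 - 1 = \dim \mathcal E - 1,$$
that is, $\cod(\Delta_k) = 1$ in $\mathcal E$.

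The two inclusions giving the decomposition, and the reduction $\dim \Delta_i = \dim \mathcal C_{\chi^i}$, are immediate from the earlier results; the only step needing real care is the homology bookkeeping at the end. One must verify that dropping a single rank produces nonzero homology in exactly two consecutive degrees, each one-dimensional, and nowhere else, so that the dimension falls by precisely one rather than more. This is the crux of the argument, and it is exactly where the exactness of the generic complex in $\mathcal E$ (all $h_i = 0$) is used.
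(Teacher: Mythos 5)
Your proof is correct and follows essentially the same route as the paper: the decomposition comes from the stratification of Proposition \ref{proposition3} together with the observation that $s < \chi$ if and only if $s \le \chi - e_i$ for some $i$, which is exactly the paper's (very terse) argument. Your codimension computation via $\dim \mathcal C_r = \tfrac12\sum_i (d_i^2 - h_i^2)$, showing that lowering $\chi_k$ by one creates $h_{k-1}=h_k=1$ and nothing else, is the calculation the paper leaves implicit, and you carry it out correctly (including the boundary cases via the conventions $\chi_0=\chi_{n+1}=0$).
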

\smallskip
 \begin{proof}
 This follows from   Proposition \ref{proposition3} and the fact that $ s \in \mathbb N^n $ satisfies
$ s <  \chi $ if and only if $ s \le \chi - e_i $ for some $ i=1, \dots, n$.
 \end{proof}
 
 \newpage
  
\noindent
\section{Moduli space of foliations.} \label{foliations}
\noindent
\subsection { }  \label{subsection8}
Let $X$ denote a (smooth, complete) algebraic variety over the complex numbers, let $L$ be a line bundle on $X$ and let $\omega$ denote a global section of 
$\Omega^1_X \otimes L$ (a twisted differential 1-form). A simple local calculation shows that $\omega \wedge d\omega$ is a section of  $\Omega^3_X \otimes L^{\otimes 2}$. We say that $\omega$ is integrable if it satisfies the Frobenius condition  $\omega \wedge d\omega = 0$. We denote 
$$\mathcal F(X, L) \subset \mathbb P H^0(X, \Omega^1_X \otimes L)$$
the projective classes of integrable 1-forms. The map 
$$\varphi: H^0(X, \Omega^1_X \otimes L) \to H^0(X, \Omega^3_X \otimes L^{\otimes 2})$$
 such that  $\varphi(\omega) = \omega \wedge d\omega$ is a homogeneous quadratic map between vector spaces and hence $\varphi^{-1}(0) = \mathcal F(X, L)$ is an algebraic variety defined by homogeneous quadratic equations.

\medskip

Our purpose is to understand the geometry of  $\mathcal F(X, L)$. In particular, we are interested in the problem of describing its irreducible components. For a survey on this problem see for example \cite{LN}.

\subsection { }  \label{subsection9}
\medskip
Let $r$ and $d$ be natural numbers. Consider a differential 1-form in $\mathbb C^{r+1}$
$$\omega = \sum_{i=0}^r a_i  dx_i$$
where the $a_i$ are homogeneous polynomials of degree $d - 1$ in variables $x_0, \dots, x_r$, with complex coefficients. We say that $\omega$ has degree $d$ (in particular the 1-forms $dx_i$ have degree one). Denoting $R$ the radial vector field, let us assume that
$$ <\omega, R> = \sum_{i=0}^r a_i  x_i = 0$$
so that $\omega$ descends to the complex projective space $\mathbb P^r$ as a global section of the twisted sheaf of 1-forms
$\Omega^1_{\mathbb P^r}(d)$.
\medskip
We denote
$$\mathcal F(r, d) =  \mathcal F(\mathbb P^r, \mathcal O(d))$$
parametrizing 1-forms  of degree $d$ on $\mathbb P^r$  that satisfy the Frobenius integrability condition.

\newpage

\noindent
\section{Complexes associated to an integrable form.} \label{complex}
\medskip
Let us denote 
$$H^0(\mathbb P^r, \Omega^k_{\mathbb P^r}(d)) =  \Omega^k_{r}(d)$$
and 
$$\Omega_r = \bigoplus_{d \in \mathbb Z}  \bigoplus_{k = 0, \dots, r}   \Omega^k_{r}(d) $$
with structure of bi-graded commutative associative algebra given by exterior product $\wedge$ of differential forms. 
\medskip
\begin{definition}
Gelfand, Kapranov and Zelevinsky defined in \cite{GKZ} another product in $\Omega_r$, the second multiplication $*$, as follows:
$$\omega_1 * \omega_2 =  \frac {d_1}{d_1 + d_2} \omega_1 \wedge d\omega_2 + (-1)^{(k_1+1)(k_2+1)} \frac {d_2}{d_1 + d_2} \omega_2 \wedge d\omega_1$$
where $\omega_i \in  \Omega^{k_i}_{r}(d_i)$ for $i= 1, 2$. 
\end{definition}
In particular, if $\omega_1$ is a 1-form ($k_1 = 1$) then 
$$\omega_1 * \omega_2 =  \frac {d_1}{d_1 + d_2} \omega_1 \wedge d\omega_2 + \frac {d_2}{d_1 + d_2} \omega_2 \wedge d\omega_1$$
\medskip
\begin{remark}  \label{remark5} For $\omega_i \in  \Omega^{k_i}_{r}(d_i)$ for $i= 1, 2$ as above,

\smallskip

a)  $\omega_1 * \omega_2$ belongs to $\Omega^{(k_1+ k_2+1)}_{r}(d_1 + d_2)$

\smallskip

b)  $\omega_1 * \omega_2 = (-1)^{(k_1+1)(k_2+1)} \omega_2 * \omega_1$. 

\smallskip

c) It follows from an easy direct calculation that $*$ is associative (see \cite{GKZ}).

\smallskip

d) For any $\omega \in \Omega^1_{r}(d)$ we have $\omega * \omega =   \omega \wedge d\omega$. In particular, $\omega$ is integrable if and only if $\omega * \omega = 0$.
\end{remark}
\medskip
\begin{definition} \label{definition4}
For  $\omega \in \Omega^k_{r}(d)$ we consider the operator $\delta_{\omega}$ 
$$\delta_{\omega}: \Omega_r \to \Omega_r$$
such that $\delta_{\omega}(\eta) = \omega * \eta$ for $\eta \in \Omega_r$. 
\end{definition} 
\medskip

\begin{remark}  \label{remark6}  From Remark \ref{remark5} a), if $\omega \in \Omega^{k_1}_{r}(d_1)$ then 
$$\delta_{\omega}(\Omega^{k_2}_{r}(d_2)) \subset\Omega^{(k_1+ k_2+1)}_{r}(d_1 + d_2)$$
In particular, if $\omega \in \Omega^{1}_{r}(d_1)$, 
$$\delta_{\omega}(\Omega^{k_2}_{r}(d_2)) \subset\Omega^{(k_2+2)}_{r}(d_1 + d_2)$$
\end{remark}

\medskip

\begin{corollary}  \label{corollary4}
$\omega \in \Omega^1_{r}(d)$ is integrable if and only if $\delta_{\omega}^2 = 0$
\end{corollary}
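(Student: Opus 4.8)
The plan is to show the equivalence $\omega * \omega = 0 \iff \delta_\omega^2 = 0$ by unwinding the definition of $\delta_\omega$ and exploiting the associativity of the second multiplication $*$ recorded in Remark \ref{remark5} c). By Remark \ref{remark5} d), integrability of $\omega \in \Omega^1_r(d)$ is precisely the condition $\omega * \omega = 0$, so the entire statement reduces to proving that $\omega * \omega = 0$ if and only if the operator $\delta_\omega = \omega * (-)$ squares to zero on all of $\Omega_r$.

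The direction $\delta_\omega^2 = 0 \implies \omega * \omega = 0$ is the easy one: I would simply evaluate $\delta_\omega^2$ on a well-chosen element. Taking $\eta = \omega$ itself (or, if one prefers to stay in low degree, evaluating on the unit/constant $1 \in \Omega^0_r(0)$, noting $\delta_\omega(1) = \omega * 1 = \omega$ up to the normalizing scalar), one computes $\delta_\omega^2(\omega) = \omega * (\omega * \omega)$, and more to the point $\delta_\omega(\omega) = \omega * \omega$ lies in the image, so the vanishing of $\delta_\omega^2$ forces $\omega * \omega = 0$. The cleanest route is to observe $\delta_\omega^2(1)$ directly yields $\omega * \omega$ after accounting for the degree-dependent coefficients in the definition of $*$.

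For the converse, $\omega * \omega = 0 \implies \delta_\omega^2 = 0$, the key step is associativity. For any $\eta \in \Omega_r$ I would write
$$
\delta_\omega^2(\eta) = \omega * (\omega * \eta) = (\omega * \omega) * \eta,
$$
where the middle equality is exactly Remark \ref{remark5} c). Since $\omega * \omega = 0$ by hypothesis, the right-hand side is $0 * \eta = 0$, and this holds for every $\eta$, hence $\delta_\omega^2 = 0$ as an operator.

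The main obstacle, such as it is, lies in the first (easy) direction rather than the second: one must make sure that evaluating $\delta_\omega^2$ on the chosen test element genuinely recovers a nonzero scalar multiple of $\omega * \omega$ and that this scalar does not vanish for degree reasons. Because the coefficients $\tfrac{d_1}{d_1+d_2}$ and $\tfrac{d_2}{d_1+d_2}$ in the definition of $*$ depend on the degrees, I would check that for $\omega \in \Omega^1_r(d)$ with $d \neq 0$ these normalizing factors are nonzero, so that no information is lost. Once associativity is invoked for the converse and a single careful evaluation handles the forward implication, the corollary follows immediately from Remark \ref{remark5} d).
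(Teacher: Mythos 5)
Your converse direction ($\omega*\omega=0\Rightarrow\delta_\omega^2=0$ via associativity) is exactly the paper's mechanism: the single identity $\delta_\omega^2=\delta_{\omega*\omega}$, obtained from Remark \ref{remark5} c), is what the paper runs in both directions. The gap is in the direction you call easy. Neither of your proposed test elements works. For $1\in\Omega^0_r(0)$ one has, directly from the definition of $*$ with $k_2=0$, $d_2=0$,
$$\omega*1 \ = \ \tfrac{d}{d+0}\,\omega\wedge d(1) \ + \ (-1)^{2}\,\tfrac{0}{d+0}\,1\wedge d\omega \ = \ 0,$$
since $d(1)=0$ kills the first term and the normalizing coefficient $0/(d+0)$ kills the second; so $\delta_\omega(1)=0$ (not ``$\omega$ up to a scalar'') and $\delta_\omega^2(1)=0$ carries no information. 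Evaluating at $\omega$ gives $\delta_\omega^2(\omega)=\omega*(\omega*\omega)$, and your parenthetical inference --- that because $\omega*\omega=\delta_\omega(\omega)$ lies in the image of $\delta_\omega$, the vanishing of $\delta_\omega^2$ forces $\omega*\omega=0$ --- is a non sequitur: it only yields $\delta_\omega(\omega*\omega)=0$, i.e.\ $\omega*(\omega*\omega)=0$, which does not imply $\omega*\omega=0$.

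What the implication $\delta_\omega^2=0\Rightarrow\omega*\omega=0$ actually requires is an injectivity statement: $\delta_\alpha=0$ forces $\alpha=0$, applied to the $3$-form $\alpha=\omega*\omega=\omega\wedge d\omega\in\Omega^3_r(2d)$. The technique is that of Proposition \ref{proposition7}: test $\alpha*\eta=0$ first against closed $\eta$ (so only the $\eta\wedge d\alpha$ term survives, giving $d\alpha=0$), then against $\eta$ with prescribed nonzero $d\eta$ (giving $\alpha\wedge d\eta=0$ for enough $d\eta$, hence $\alpha=0$). Note this needs room for nonzero forms of degree $\ge 4$ --- the analogue of the hypothesis $k+2\le r$ in Proposition \ref{proposition7} --- so the statement is genuinely content-bearing only for $r$ large enough. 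To be fair, the paper's own proof is equally laconic here (it says only that the claim ``follows from Remark \ref{remark5} d)''), but the missing step is supplied by the injectivity argument of Proposition \ref{proposition7}, not by evaluating $\delta_\omega^2$ at $1$ or at $\omega$.
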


\smallskip

\begin{proof}  The associativity stated in Remark \ref{remark5} c) implies that 
$\delta_{\omega_1} \circ \delta_{\omega_2} = \delta_{\omega_1 * \omega_2}$. In particular, 
$\delta_{\omega}^2 = \delta_{\omega * \omega}$ and hence the claim follows from  Remark \ref{remark5} d).
 \end{proof}

\medskip
\begin{definition} \label{definition5}
For  $\omega \in \Omega^1_{r}(d)$ and $e \in \Z$ we define two differential graded vector spaces
$$C^+_{\omega}(e): \Omega^{0}_{r}(e) \to \Omega^{2}_{r}(e+d) \to \Omega^{4}_{r}(e+2d) \to \dots \to  \Omega^{2k}_{r}(e+kd) \to \dots$$
$$C^-_{\omega}(e): \Omega^{1}_{r}(e) \to \Omega^{3}_{r}(e+d) \to \Omega^{5}_{r}(e+2d) \to \dots \to  \Omega^{2k+1}_{r}(e+kd) \to \dots$$
where all maps are $\delta_{\omega}$ as in Remark \ref{remark6}.
\end{definition} 
\medskip

\begin{remark}  \label{remark7} 
It follows from Corollary \ref{corollary4} that   $C^+_{\omega}(e)$ and $C^-_{\omega}(e)$ are differential complexes (for any $e \in \Z$) if and only if $\omega$ is integrable.
\end{remark}
\medskip

\begin{remark}  \label{remark8} 
To fix ideas we shall mostly discuss   $C^-_{\omega}(e)$, but similar considerations will apply to $C^+_{\omega}(e)$. If no confusion seems to arise we shall denote $C^-_{\omega}(e) = C_{\omega}(e)$.
\end{remark}

\medskip
\begin{proposition} \label{proposition7} 
Let $\omega \in \Omega^1_{r}(d)$, $e \in \Z$ and $k \in \N$ such that $k+2 \le r$. Then $\omega * \eta = 0$ for all $\eta \in  \Omega^k_r(e)$ if and only if 
$\omega = 0$. In other words, the linear map
$$ \delta:  \Omega^1_{r}(d) \to \Hom_K(\Omega^{k}_{r}(e), \Omega^{k+2}_{r}(e+d))$$
sending $\omega \mapsto \delta_{\omega}$, is injective.
\end{proposition}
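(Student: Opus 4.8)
The plan is to prove the contrapositive: assuming $\omega\neq 0$, I will exhibit a single $\eta\in\Omega^k_r(e)$ with $\omega*\eta\neq 0$. I regard sections of $\Omega^j_r(m)$ as polynomial $j$-forms on $\mathbb C^{r+1}$ of total degree $m$ (coefficients homogeneous of degree $m-j$) that are annihilated by $i_R$, contraction with the radial field $R=\sum_i x_i\,\partial_{x_i}$; for such an $\alpha$, Cartan's formula $L_R=d\,i_R+i_R\,d$ together with $L_R\alpha=m\,\alpha$ gives the identity $i_R\,d\alpha=m\,\alpha$, used below. The product to analyze is
$$\omega*\eta=\tfrac{d}{d+e}\,\omega\wedge d\eta+\tfrac{e}{d+e}\,\eta\wedge d\omega .$$
The idea is to evaluate this at a point $p$ chosen so that the second summand drops out.

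First I reduce to a pointwise computation. Since $\omega\neq 0$, there is $p\neq 0$ with $\omega_p\neq 0$, and because $R$, $i_R$, the grading and $*$ are all invariant under the linear $\mathrm{GL}_{r+1}$-action, I may take $p=e_0=(1,0,\dots,0)$; as $i_R\omega=0$ forces $\omega_p(R_p)=0$, a rotation of $x_1,\dots,x_r$ fixing $x_0$ lets me assume $\omega_p=dx_1$. It then suffices to find $\eta$ with $(\omega*\eta)_p\neq 0$.

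At $p=e_0$ the covectors annihilated by $i_{R_p}$ form the $r$-dimensional space $H_p=\langle dx_1,\dots,dx_r\rangle$, which contains $\omega_p=dx_1$. This is the one spot where the hypothesis $k+2\le r$ enters: because $k+2\le r=\dim H_p$, the $(k+2)$-covector $\omega_p\wedge\Phi$ is nonzero for $\Phi=dx_2\wedge\cdots\wedge dx_{k+2}$, and $i_{R_p}\Phi=0$. I realize $\Phi$ as the derivative at $p$ of an honest section: set $\eta=i_R\beta$ with $\beta=x_0^{\,e-k-1}\,dx_2\wedge\cdots\wedge dx_{k+2}$, so that $i_R\eta=i_R^2\beta=0$ and $\eta\in\Omega^k_r(e)$. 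Using $d\,i_R=L_R-i_R\,d$ and $L_R\beta=e\beta$ one gets $d\eta=e\beta-i_R\,d\beta$, and evaluating at $p$ (where $x_l=0$ for $l\ge 2$) gives $\eta_p=0$ and $(d\eta)_p=(k+1)\,\Phi$. Consequently the $d\omega$-term vanishes at $p$ and
$$(\omega*\eta)_p=\tfrac{d}{d+e}\,\omega_p\wedge(d\eta)_p=\tfrac{(k+1)d}{d+e}\,dx_1\wedge\cdots\wedge dx_{k+2}\neq 0,$$
so $\omega*\eta\neq 0$, which is the desired contradiction.

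I expect the main obstacle to be precisely this construction of $\eta$: one must exhibit a genuine section of $\Omega^k_r(e)$ — the constraint $i_R\eta=0$, i.e. descent to $\mathbb P^r$, is essential — with prescribed vanishing and prescribed exterior derivative at $p$, all while respecting the homogeneity degree $e$. This forces $e-k-1\ge 0$; equivalently it needs $\Omega^k_r(e)\neq 0$, which by Bott's formula holds exactly for $e\ge k+1$, so the restriction is harmless (for $e<k+1$ the space is zero and the assertion is vacuous). The only other point to watch is the sign bookkeeping in the contractions $i_R$ and in Cartan's formula, which is routine.
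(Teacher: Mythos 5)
Your proof is correct, but it is organized quite differently from the paper's. The paper argues in two global stages: it first feeds in \emph{closed} monomial test forms $\eta = x_{i_1}^{e-k}\,dx_{i_1}\wedge\dots\wedge dx_{i_k}$ (so that the term $\omega\wedge d\eta$ drops out identically) to conclude $d\omega=0$, and then uses non-closed monomials $x_{i_{k+1}}^{e-k}\,dx_{i_1}\wedge\dots\wedge dx_{i_k}$ to isolate $\omega\wedge d\eta$ and conclude $\omega=0$; the whole computation is done in affine coordinates. You instead run a single pointwise argument: after normalizing a point $p$ with $\omega_p\neq 0$, you build one test form $\eta=i_R\beta$ that vanishes at $p$ but has prescribed nonzero $(d\eta)_p$, so that only the $\omega\wedge d\eta$ term survives \emph{at $p$}, and you never need the intermediate step $d\omega=0$. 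What your route buys is that the test form is manifestly a genuine global section of $\Omega^k_{\mathbb P^r}(e)$ (homogeneous of the right degree and killed by $i_R$), a point the paper's affine-coordinate computation leaves implicit, and you also correctly flag the degenerate range $e<k+1$ where $\Omega^k_r(e)=0$ and the statement is vacuous. What the paper's route buys is brevity: no choice of point, no Cartan-formula bookkeeping, just two families of monomials. I verified your key computation $(d\eta)_p=(k+1)\Phi$ via $d\,i_R\beta = e\beta - i_R\,d\beta$ evaluated at $p=e_0$, and the conclusion $(\omega*\eta)_p=\tfrac{(k+1)d}{d+e}\,dx_1\wedge\dots\wedge dx_{k+2}\neq 0$ is sound (note $d+e>0$ and $d\neq 0$ whenever $\Omega^1_r(d)\neq 0$), so there is no gap.
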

\smallskip
\begin{proof} First remark that $\omega \wedge \eta = 0$ for all $\eta \in  \Omega^k_r(e)$ (with $k+1 \le r$) easily implies $\omega = 0$. 
Now suppose  $\omega * \eta = 0$, that is, $d \ \omega \wedge d\eta + e \  \eta \wedge d\omega = 0$,  for all $\eta \in \Omega^k_r(e)$. 
Take $\eta = x_{i_1}^{e-k} dx_{i_1} \wedge \dots \wedge dx_{i_k}$ (here $x_i$ denote affine coordinates and $1 < i_1 < \dots i_k < n$).  Since $d\eta = 0$, we have 
$dx_{i_1} \wedge \dots \wedge dx_{i_k} \wedge d\omega = 0$. Hence $d\omega = 0$ by the first remark. Using the hypothesis again, we know $\omega \wedge d\eta = 0$  for all $\eta \in  \Omega^k_r(e)$. Now take
$\eta = x_{i_{k+1}}^{e-k} dx_{i_1} \wedge \dots \wedge dx_{i_k}$ (where $1 < i_1 < \dots < i_{k+1} < n$). It follows that 
$dx_{i_1} \wedge \dots \wedge dx_{i_{k+1}} \wedge \omega = 0$ and hence $\omega = 0$.
 \end{proof}

\medskip
\begin{theorem} \label{theorem1} Fix  $e \in \Z$. Let us consider the graded vector space
$$\Omega_{r}(e) = \bigoplus_{0 \le k \le [\frac{r-1}{2}]} \Omega^{2k+1}_{r}(e+kd)$$
(direct sum of the spaces appearing in $C^-_{\omega}(e)$ above). Define the linear map 
$$\delta(e) = \delta:  \Omega^1_{r}(d) \to  \prod_{k=1}^{[\frac{r-1}{2}]} \Hom_K(\Omega^{2k-1}_{r}(e+(k-1)d), \Omega^{2k+1}_{r}(e+kd)) $$
such that $ \delta(\omega) = \delta_{\omega}$ for each $\omega \in \Omega^1_{r}(d)$, and its projectivization
$$\P \delta: \P  \Omega^1_{r}(d) \to  \prod_{k=1}^{[\frac{r-1}{2}]} \P \Hom_K(\Omega^{2k-1}_{r}(e+(k-1)d), \Omega^{2k+1}_{r}(e+kd)) $$
Denote $\mathcal C = \mathcal C( \Omega^{1}_{r}(e), \Omega^{3}_{r}(e+d), \Omega^{5}_{r}(e+2d), \dots, \Omega^{2[\frac{r-1}{2}]+1}_{r}(e+[\frac{r-1}{2}]d))$ the variety of complexes as in \ref{subsection1} and  $\mathcal F(r, d)$ the variety of foliations as in \ref{subsection9}. Then
$$\mathcal F(r, d) = (\P \delta) ^{-1}(\mathcal C)$$
In other terms, $\P \delta(\mathcal F(r, d)) = L \cap \mathcal C$, that is, the variety of foliations $\mathcal F(r, d)$ corresponds via the linear
map $\P \delta$ to the intersection of the variety of complexes with the linear space $L = \im (\P \delta)$.

\end{theorem}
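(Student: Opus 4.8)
The plan is to trade the set-theoretic identity for the single operator identity $\delta_\omega^2 = \delta_{\omega*\omega}$, valid by associativity of $*$ (Remark \ref{remark5} c)), together with the criterion that $\omega$ is integrable iff $\omega*\omega = 0$ (Remark \ref{remark5} d), Corollary \ref{corollary4}). First I would record that $\delta$ is injective and that each of its components is nonzero on nonzero forms, both of which follow from Proposition \ref{proposition7} applied to the spaces $\Omega^{2k-1}_r(e+(k-1)d)$, which is legitimate since $(2k-1)+2\le r$ for $1\le k\le[\frac{r-1}{2}]$. Hence $\P\delta$ is a well-defined morphism on all of $\P\Omega^1_r(d)$ with image the space $L$ of the statement, and once the main equality is established, applying $\P\delta$ to it gives the reformulation $\P\delta(\mathcal F(r,d)) = L\cap\mathcal C$. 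Since $\mathcal C$ is the cone cut out by the bilinear equations $f_{i+1}\circ f_i = 0$, the condition $\P\delta([\omega])\in\mathcal C$ is equivalent to $\delta_\omega\circ\delta_\omega = 0$ on each space $\Omega^{2k-1}_r(e+(k-1)d)$ occurring in $C^-_\omega(e)$; in particular it forces $\delta_\omega^2 = 0$ on $\Omega^1_r(e)$.

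The inclusion $\mathcal F(r,d)\subseteq(\P\delta)^{-1}(\mathcal C)$ is then immediate: if $\omega$ is integrable, Corollary \ref{corollary4} gives $\delta_\omega^2 = 0$ on all of $\Omega_r$, so a fortiori the consecutive maps of $C^-_\omega(e)$ compose to zero and $\delta(\omega)\in\mathcal C$. This half uses nothing beyond Corollary \ref{corollary4} and holds for all $r$.

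The reverse inclusion is the heart of the matter and the step I expect to be the main obstacle. Assume $\P\delta([\omega])\in\mathcal C$, so $\delta_\omega^2 = 0$ on $\Omega^1_r(e)$; writing $\psi = \omega*\omega\in\Omega^3_r(2d)$ and using $\delta_\omega^2 = \delta_\psi$, this says $\psi*\eta = 0$ for all $\eta\in\Omega^1_r(e)$, and I must deduce $\psi = 0$, whereupon Remark \ref{remark5} d) yields integrability of $\omega$. This is exactly the analogue of Proposition \ref{proposition7} with a $3$-form in the first slot, which is not literally covered by that statement, so I would reprove it by the same test-form technique: from $2d\,\psi\wedge d\eta + e\,\eta\wedge d\psi = 0$, feeding in closed forms $\eta = x_i^{e-1}dx_i$ gives $dx_i\wedge d\psi = 0$ for every $i$ and hence $d\psi = 0$, and then feeding in $\eta = x_j^{e-1}dx_i$ with $d\eta\ne 0$ gives $\psi\wedge dx_i\wedge dx_j = 0$ and hence $\psi = 0$. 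Both wedge-cancellation steps need enough coordinates to isolate the components of the $4$-form $d\psi$ and of the $3$-form $\psi$, a bound of the shape $3+2\le r$; this is the same requirement as the presence of a genuine composition $\Omega^1_r(e)\to\Omega^3_r(e+d)\to\Omega^5_r(e+2d)$ in $C^-_\omega(e)$, namely $[\frac{r-1}{2}]\ge 2$, and the few low-dimensional cases should be recorded separately. Granting this lemma, $\omega*\omega = 0$, so $[\omega]\in\mathcal F(r,d)$ and the two inclusions give the theorem.
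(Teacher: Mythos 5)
Your proposal is correct in substance, but it takes a substantially more careful route than the paper, whose entire proof is the single sentence that the statement ``is a rephrasing of Corollary \ref{corollary4} or Remark \ref{remark7}.'' The forward inclusion $\mathcal F(r,d)\subseteq(\P\delta)^{-1}(\mathcal C)$ really is such a rephrasing, exactly as in your second paragraph. For the reverse inclusion, however, you have put your finger on a point the paper glosses over: Corollary \ref{corollary4} characterizes integrability by the vanishing of $\delta_\omega^2$ on \emph{all} of $\Omega_r$, whereas membership of $\P\delta([\omega])$ in $\mathcal C$ only asserts that the consecutive composites vanish on the finitely many graded pieces $\Omega^{2k-1}_r(e+(k-1)d)$ of the single complex $C^-_\omega(e)$ for the one fixed $e$. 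Closing that gap requires precisely your lemma: if $\psi=\omega*\omega$ satisfies $\psi*\eta=0$ for all $\eta\in\Omega^1_r(e)$ then $\psi=0$. This is the analogue of Proposition \ref{proposition7} with a $3$-form in the first slot, it is not literally contained in that proposition, and your re-run of the test-form argument ($\eta$ closed to force $d\psi=0$, then $\eta$ with $d\eta$ a decomposable $2$-form to force $\psi=0$) is the right way to prove it, subject to the dimension bound $r\ge 5$ you identify. Your caveat about low $r$ is also well taken and is in fact a caveat about the theorem itself: for $r=3,4$ the chain $C^-_\omega(e)$ has length one, the variety of complexes imposes no condition, and the stated equality cannot hold as written. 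In short, your argument buys an actual proof of the nontrivial inclusion (and exposes the implicit hypothesis $r\ge 5$), at the cost of one auxiliary lemma that the paper neither states nor proves.
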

\smallskip
 \begin{proof}
 The statement is a rephrasing of Corollary \ref{corollary4} or Remark  \ref{remark7}.
 \end{proof}

\medskip
\begin{proposition} \label{proposition8} 
Let us denote
$$d^k_r(e) = \dim \Omega^{k}_{r}(e) = \binom{r-k+e}{r-k} \binom{d-1}{k}$$
(see \cite{OSS}) and in particular
$$d_k = d^{2k+1}_r(e+kd) = \dim \Omega^{2k+1}_{r}(e+kd), \  0 \le k \le [\frac{r-1}{2}]$$
For this $d = (d_0, d_1, \dots, d_{[\frac{r-1}{2}]} )$ we consider the finite ordered set $R = R(d)$ as in Proposition \ref{proposition4}. 
Then each irreducible component of the variety of foliations 
$\mathcal F(r, d)$ is an irreducible component of the linear section $(\P \delta)^{-1}(\overline {\mathcal C}_r)$ for a unique $r \in R^+$. 

\end{proposition}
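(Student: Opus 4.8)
The plan is to deduce the statement from Theorem \ref{theorem1} together with the description of the irreducible components of a variety of complexes in Proposition \ref{proposition4}. By Theorem \ref{theorem1} we have $\mathcal F(r,d) = (\P\delta)^{-1}(\mathcal C)$, and by Proposition \ref{proposition4} the variety of complexes splits as $\mathcal C = \bigcup_{r \in R^+} \overline{\mathcal C}_r$, each $\overline{\mathcal C}_r$ being irreducible. Taking preimages under the morphism $\P\delta$ yields
$$\mathcal F(r,d) = \bigcup_{r \in R^+} (\P\delta)^{-1}(\overline{\mathcal C}_r),$$
a finite union of closed subsets, each contained in $\mathcal F(r,d)$ since $\overline{\mathcal C}_r \subset \mathcal C$. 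So the statement is really an assertion about how the irreducible components of $\mathcal F(r,d)$ distribute among the closed sets $(\P\delta)^{-1}(\overline{\mathcal C}_r)$.

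First I would settle existence by the elementary topology of components of a finite union. Given a component $Z$ of $\mathcal F(r,d)$, the covering $Z = \bigcup_{r \in R^+} \big(Z \cap (\P\delta)^{-1}(\overline{\mathcal C}_r)\big)$ by closed subsets, together with the irreducibility of $Z$, forces $Z \subset (\P\delta)^{-1}(\overline{\mathcal C}_r)$ for at least one $r \in R^+$. For such an $r$, the closed set $(\P\delta)^{-1}(\overline{\mathcal C}_r)$ lies inside $\mathcal F(r,d)$ and contains $Z$; any irreducible closed set strictly containing $Z$ within it would be an irreducible closed subset of $\mathcal F(r,d)$ properly containing the component $Z$, which is impossible. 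Hence $Z$ is itself a component of $(\P\delta)^{-1}(\overline{\mathcal C}_r)$.

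For uniqueness I would attach to $Z$ its generic rank vector. Since $\rank$ is lower semicontinuous, the tuple $(\rank \delta_\omega^{(k)})_k$ is constant, say equal to $\rho = \rho(Z) \in R$, on a dense open $U \subseteq Z$, and is everywhere $\le \rho$ on $Z$; thus $\P\delta(U) \subset \mathcal C_\rho$ and $\P\delta(Z) \subset \overline{\mathcal C}_\rho$. Because the strata $\mathcal C_s$ are pairwise disjoint (they are the orbits of $G$, Remark \ref{remark2}), the vector $\rho$ is intrinsically attached to $Z$. As $\overline{\mathcal C}_r = \{\,\rank \le r\,\}$, we obtain $Z \subset (\P\delta)^{-1}(\overline{\mathcal C}_r) \Leftrightarrow \rho \le r$, and by the existence step $Z$ is then a component of $(\P\delta)^{-1}(\overline{\mathcal C}_r)$ for every $r \in R^+$ with $\rho \le r$. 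Were $Z$ a common component of $(\P\delta)^{-1}(\overline{\mathcal C}_r)$ and $(\P\delta)^{-1}(\overline{\mathcal C}_s)$ with $r \ne s$ in $R^+$, then Corollary \ref{corollary3} would give $Z \subset (\P\delta)^{-1}(\overline{\mathcal C}_t)$ with $t = \min(r,s) < r,s$, i.e. $\rho \le t$. Thus uniqueness is equivalent to the assertion that $\rho(Z)$ is dominated by a single maximal element of $R$, and I would obtain it from the key claim that $\rho(Z)$ is itself maximal, i.e. $\rho(Z) \in R^+$; then $\rho \le r$ with $r \in R^+$ forces $r = \rho(Z)$.

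I expect the main obstacle to be exactly this claim, that the generic associated complex of a component of $\mathcal F(r,d)$ attains maximal rank in $R$. Proposition \ref{proposition7} disposes of the degenerate ranks, giving $\rho_k \ge 1$ for all $k$, but upgrading this to maximality is where the specific geometry of the linear space $L = \im(\P\delta)$ against the stratification $\{\mathcal C_s\}$ must enter. Writing $W = \P\delta(Z)$, which is a component of $L \cap \mathcal C$ since $\P\delta$ is a closed embedding (it is injective by Proposition \ref{proposition7}), I would analyse a generic point $p = \delta_\omega \in W \cap \mathcal C_\rho$, using that $\mathcal C_\rho$ is smooth (Proposition \ref{proposition2}) and the tangent-space formula of Corollary \ref{dim tangent space}. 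The aim is to show that if $\rho + e_i \in R$ for some $i$, then $L$ meets the larger stratum near $p$ in such a way that $W$ fails to be maximal among irreducible subsets of $L \cap \mathcal C$, contradicting that $Z$ is a component of $\mathcal F(r,d)$. This transversality and dimension-count step is the delicate heart of the argument; once it is in place, the rest is formal.
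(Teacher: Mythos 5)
Your existence argument is complete and is exactly the paper's: decompose $\mathcal C = \bigcup_{r \in R^+} \overline{\mathcal C}_r$ via Proposition \ref{proposition4}, pull back through Theorem \ref{theorem1}, and use irreducibility of the component $Z$ against the finite closed cover, plus maximality of $Z$ inside $\mathcal F(r,d)$, to conclude $Z$ is a component of some $(\P\delta)^{-1}(\overline{\mathcal C}_r)$. That half needs no further comment.

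The uniqueness half is where your proposal stops short of a proof. You correctly reduce uniqueness to the statement that the generic rank vector $\rho(Z)$ is dominated by a single element of $R^+$, propose to get this from the stronger claim $\rho(Z) \in R^+$, and then explicitly leave that claim unproven, offering only a speculative transversality/dimension-count program. As written, this is a genuine gap: nothing in your argument rules out a component $Z$ whose generic rank vector sits below two distinct maximal elements of $R$, in which case $Z$ would be a component of both corresponding linear sections. The paper's own proof settles uniqueness in one line by identifying the element $r$ with the sequence of ranks of $\delta_\omega$ for a general $\omega \in X$ --- i.e., it takes $r = \rho(X)$ as the canonical datum attached to $X$ and does not pass through your intermediate claim $\rho(Z)\in R^+$ or any transversality analysis. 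So you have correctly isolated the only nontrivial point, but you have neither closed it yourself nor reproduced the (admittedly terse) identification the paper uses; the machinery you invoke at the end (smoothness of $\mathcal C_\rho$, Corollary \ref{dim tangent space}, deformation off the stratum) is not developed far enough to carry any weight. To count as a proof, you would need either to establish $\rho(Z) \in R^+$ (or the weaker single-domination statement), or to argue directly, as the paper does, that the $r$ produced in the existence step is forced to equal the generic rank sequence of $Z$.
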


\smallskip
 \begin{proof}
From  Proposition \ref{proposition4}, we have the decomposition into irreducible components
$$\mathcal C = \bigcup_{r \in R^+} \overline {\mathcal C}_r$$
From Theorem \ref{theorem1} we obtain: 
$$\mathcal F(r, d) = (\P \delta) ^{-1}(\mathcal C) = \bigcup_{r \in R^+}  (\P \delta) ^{-1}(\overline {\mathcal C}_r) $$
and this implies that each irreducible component $X$ of $\mathcal F(r, d)$ is an irreducible component of $(\P \delta)^{-1}(\overline {\mathcal C}_r)$  for some $r \in R^+$. 
This element $r$ is the sequence of ranks of $\delta_{\omega}$ for a general $\omega \in X$, hence it is unique.
 \end{proof}
\medskip

\newpage

\medskip

\vspace {8 mm}
\small
\begin{flushleft}
  Universidad de Buenos Aires / CONICET \\
  Departamento de Matem\'{a}tica, FCEN  \\
  Ciudad Universitaria  \\
  (1428) Buenos Aires  \\
  ARGENTINA \\
  fcukier@dm.uba.ar 
\end{flushleft}

\begin{thebibliography}{99}
\medskip


\bibitem{ACGH}
E. Arbarello, M. Cornalba, P. Griffiths and J. Harris
\emph{Geometry of Algebraic Curves, vol. 1}. Springer-Verlag, 1984.
\medskip

\bibitem{B}
N. Bourbaki
\emph{Algebre, Chapitre 10, Algebre Homologique}. Masson, 1980.
\medskip

\bibitem{Br}
M. Brion
\emph{Groupe de Picard et nombres caracteristiques des varietes spheriques}. Duke Math. J., Vol. \textbf{58}, (1989), 397--425.
\medskip

\bibitem{CaLN}
C. Camacho and A.  Lins Neto, \emph{The topology of integrable differential forms near a singularity.}
 Inst. Hautes \'{E}tudes Sci. Publ. Math. No. \textbf{55}, (1982), 5--35.
\medskip

\bibitem{GKZ}
I. Gelfand, M. Kapranov and A. Zelevinsky,
\emph{Discriminants, Resultants and Multidimensional Determinants}. Birkhauser, 1994.
\medskip


\bibitem{K}
G. Kempf, \emph{Images of homogeneous vector bundles and varieties of complexes.}
Bulletin of the AMS \textbf{81}, (1975).
\medskip

\bibitem{LN}
A. Lins Neto, \emph{Componentes irredutiveis dos espaos de folheacoes}. Rio de Janeiro, 2007. 
\medskip

\bibitem{OSS}
C. Okonek, M. Schneider and H. Spindler, \emph{Vector Bundles on Complex Projective Spaces}. Birkhauser, 1980. 
\medskip

\end{thebibliography}
\end{document}